\newtheorem{thm}{Theorem}
\newtheorem{lem}[thm]{Lemma}
\newtheorem{con}[thm]{Conjecture}
\begin{document}
\title{Conjectured bounds for the sum of squares of positive eigenvalues of a graph}
\author{Clive Elphick\thanks{\texttt{clive.elphick@gmail.com}}, Miriam Farber\thanks{\texttt{Department of Mathematics, Massachusetts Institute of Technology, Cambridge MA, USA, mfarber@mit.edu}. The work of this author was supported by the National Science Foundation Graduate Research Fellowship under Grant No. 1122374}, Felix Goldberg\thanks{\texttt{Caesarea-Rothschild Institute, University of Haifa, Haifa, Israel, felix.goldberg@gmail.com}} and Pawel Wocjan\thanks{\texttt{Department of Electrical Engineering and Computer Science, University of Central
Florida, Orlando, USA, wocjan@eecs.ucf.edu}}}
\maketitle

\abstract{A well known upper bound for the spectral radius of a graph, due to Hong, is that $\mu_1^2 \le 2m - n + 1$. It is conjectured that for connected graphs $n - 1 \le s^+ \le 2m - n + 1$, where $s^+$ denotes the sum of the squares of the positive eigenvalues. The conjecture is proved for various classes of graphs, including bipartite, regular, complete $q$-partite, hyper-energetic, and barbell graphs. Various searches have found no counter-examples. The paper concludes with a brief discussion of the apparent difficulties of proving the conjecture in general. }

\section{Introduction}

Let $G$ be a simple and undirected graph with $n$ vertices, $m$ edges, chromatic number $\chi$, minimum degree $\delta$, maximum degree $\Delta$ and adjacency matrix $A$ with eigenvalues $\mu_1 \ge \mu_2\ge ... \ge \mu_n$. The inertia of A is the ordered triple $(\pi, \nu, \gamma)$, where $\pi, \nu$ and $\gamma$ are the numbers (counting multiplicities) of positive, negative and zero eigenvalues of A respectively. Let

\[
s^+ = \sum_{i=1}^\pi \mu_i^2 \mbox{  and  } s^- = \sum_{i=n-\nu+1}^n \mu_i^2.
\]

Note that $\sum_{i=1}^n \mu_i^2 = s^+ + s^- = tr(A^2) = 2m$ and $2m \ge 2(n - 1)$ for connected graphs. Also let graph energy $E = \sum_{i=1}^n |\mu_i|.$ Since $tr(A)=0$,
$$
\sum_{i=1}^{\pi} \mu_i = -\sum_{i=n-\nu+1}^n \mu_i = E/2.
$$
Wocjan and Elphick \cite{wocjan13} proved that $\chi \ge s^+/s^-$ and conjectured that $\chi \ge 1 + s^+/s^-$. This Conjecture was recently proven by Ando and Lin in \cite{ando15}. It provides an example of replacing $\mu_1^2$ with $s^+$, because Edwards and Elphick \cite{edwards83} proved that $\chi \ge 2m/(2m - \mu_1^2)$.

In 1988 Hong \cite{hong88} proved that for connected graphs:

\[
\mu_1^2 \le 2m - n + 1,
\]

with equality only for $K_n$ and Star graphs. Note that for $K_n$ and Star graphs, $s^+ = \mu_1^2$. Hong \cite{hong93} also noted that this bound holds for graphs with no isolated vertices.

This bound has been strengthened by several authors. For example, Nikiforov \cite{nikiforov02} proved that:

\[
\mu_1 \le \frac{\delta - 1}{2} + \sqrt{2m - n\delta + \frac{(1 + \delta)^2}{4}}
\]

which is exact for regular graphs and strengthens Hong's bound, as discussed in \cite{nikiforov02}.

\section{Conjecture}

\begin{con}
Let $G$ be a connected graph. Then

\[
\min{(s^-, s^+)} \ge n - 1.
\]
\end{con}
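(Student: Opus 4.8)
The plan is to first symmetrize the statement. Because $s^+ + s^- = 2m$, one has $\min(s^+,s^-) = m - \frac{1}{2}|s^+ - s^-|$, so the Conjecture is equivalent to the single inequality
\[
|s^+ - s^-| \le 2(m - n + 1).
\]
The right-hand side is twice the cyclomatic number $m-n+1$, which is encouraging: for a spanning tree $m-n+1 = 0$, and since trees are bipartite their spectrum is symmetric, giving $s^+ = s^-$ and hence equality. The same remark settles every bipartite graph, where $s^+ = s^- = m \ge n-1$. I would also record that the bound is tight for complete graphs: for $K_n$ the spectrum is $\{n-1, -1, \ldots, -1\}$, so $s^+ - s^- = (n-1)^2 - (n-1) = (n-1)(n-2) = 2(m-n+1)$. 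This pins down the extremal configurations a proof must accommodate.

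Next I would set up the quantity to be controlled. Writing $A = A_+ - A_-$ for the decomposition into positive and negative parts (with $A_\pm \succeq 0$ supported on orthogonal eigenspaces), we get $s^+ - s^- = tr(A|A|) = \sum_i \mu_i|\mu_i|$. The strategy is to grow $G$ from a spanning tree $T$ by inserting the remaining edges one at a time, $T = G_0 \subset G_1 \subset \cdots \subset G_{m-n+1} = G$, tracking how $\sum_i \mu_i|\mu_i|$ moves. Each insertion of an edge $\{u,v\}$ replaces $A$ by $A+E$, where $E = e_u e_v^\top + e_v e_u^\top$ has inertia $(1,1,n-2)$; Weyl's inequalities then interlace the new eigenvalues between consecutive old ones. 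Writing $\phi(x) = x|x|$, which is $C^1$ with $\phi'(x) = 2|x|$, and $A_t = A + tE$, the exact local rate is
\[
\frac{d}{dt}\, tr\,\phi(A_t) = tr\bigl(\phi'(A_t)\,E\bigr) = 4\,\bigl(|A_t|\bigr)_{uv}.
\]
The aim would be to integrate these rates over $t \in [0,1]$ and telescope over the $m-n+1$ insertions to recover the global bound.

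I expect the telescoping to be the main obstacle, and for a sharp structural reason. The quantity $\sum_i \mu_i|\mu_i|$ is not the trace of any polynomial in $A$, so it has no closed-walk interpretation and the spectral-moment identities that control $tr(A^k) = \sum_i \mu_i^k$ give no direct leverage; as a function of the matrix it is neither convex nor concave, because $x|x|$ reverses the sign of its second derivative at the origin, so majorization and standard convexity arguments are unavailable. Most tellingly, the per-edge increment is genuinely not bounded by the value a naive induction would require: completing $K_4$ from $K_4 - e$ raises $s^+ - s^-$ by $7 - \sqrt{17} \approx 2.88 > 2$, and only the telescoped total lands exactly on $2(m-n+1)$. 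The difficulty is therefore intrinsically global — one must bound the accumulated spectral asymmetry $\sum_i \mu_i|\mu_i|$ across all eigenvalues at once rather than edge-by-edge — and I expect a general argument to stall precisely here, which is consistent with the paper proving the Conjecture only in structured special cases.
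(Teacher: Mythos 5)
What you have written is not a proof, and you acknowledge as much in your final paragraph: the edge-insertion strategy is set up and then shown by your own $K_4$ computation to fail, so the statement is left unproved. To be clear about the target, this statement is Conjecture~1 of the paper --- the authors do not prove it either; they prove it only for special classes (bipartite, regular, complete $q$-partite, hyper-energetic, barbell, and graphs in $P(2)$ with minimum degree at least two) and report exhaustive searches. So there is no general ``paper proof'' for your attempt to be measured against; but measured simply as a proof attempt, the concrete gap is this: after the correct reduction of the conjecture to $|s^+ - s^-| \le 2(m-n+1)$ and the correct disposal of the bipartite case ($s^+ = s^- = m \ge n-1$, which is the paper's Theorem~\ref{thm:bipart}), every subsequent step is a plan rather than an argument. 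The derivative identity $\frac{d}{dt}\,tr\,\phi(A_t) = 4\bigl(|A_t|\bigr)_{uv}$ is correct but is never converted into any estimate, and the one estimate that would make the telescoping work --- that each edge insertion raises $|s^+ - s^-|$ by at most $2$ --- is exactly what your $K_4$ versus $K_4 - e$ calculation refutes, since there the increment is $7 - \sqrt{17} > 2$.

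Two things in your write-up are nonetheless worth keeping. First, your symmetrized reformulation is literally the paper's own ``alternative formulation'': with cyclomatic number $c = m - n + 1$ (for connected graphs), the conjecture reads $m - c \le s^{\pm} \le m + c$, and the authors likewise float induction on $c$ as a possible route, so you independently rediscovered their framing. Second, your $K_4$ computation is a genuine addition to that discussion: it shows that the induction on $c$ suggested in the paper cannot be carried out edge-by-edge with the naive increment bound, because a single edge can consume more than its share of the budget $2c$; any inductive proof must track and spend accumulated slack globally (note $K_4 - e$ has $|s^+ - s^-| = \sqrt{17} - 1 < 2c = 4$, and completing $K_4$ uses up precisely the remaining slack to reach equality). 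That observation sharpens the paper's closing remark that the difficulty is intrinsically global, but it does not prove the conjecture, and neither does anything else in the proposal beyond the bipartite case.
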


Note that $s^- \ge n - 1$ implies $s^+ \le 2m - n + 1$ and vice versa.

\begin{con}
Let $G$ be a graph with $\kappa$ connected components. Then

\[
\min{(s^-, s^+)} \ge n - \kappa.
\]

\end{con}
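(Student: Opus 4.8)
The plan is to reduce the general statement to the connected case, i.e.\ to Conjecture 1. Write $G = G_1 \sqcup \cdots \sqcup G_\kappa$ as the disjoint union of its connected components, where $G_i$ has $n_i$ vertices and $m_i$ edges, so that $n = \sum_{i=1}^\kappa n_i$ and $m = \sum_{i=1}^\kappa m_i$. Since the adjacency matrix $A$ is block-diagonal with blocks $A(G_i)$, the multiset of eigenvalues of $G$ is the union (with multiplicity) of the eigenvalue multisets of the $G_i$. In particular, the positive eigenvalues of $G$ are exactly the positive eigenvalues of the components collected together, and likewise for the negative eigenvalues.

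First I would record the resulting additivity. Because each eigenvalue of $G$ comes from a unique component and retains its sign, summing squares over the positive (respectively negative) eigenvalues gives
\[
s^+(G) = \sum_{i=1}^\kappa s^+(G_i), \qquad s^-(G) = \sum_{i=1}^\kappa s^-(G_i).
\]
This is the only structural input needed; no interaction between components occurs.

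Next I would apply Conjecture 1 to each component. Each $G_i$ is connected, so $\min(s^-(G_i), s^+(G_i)) \ge n_i - 1$; when $G_i$ is a single isolated vertex this reads $0 \ge 0$, which is consistent. Summing the bound for $s^+$ over all components yields $s^+(G) = \sum_i s^+(G_i) \ge \sum_i (n_i - 1) = n - \kappa$, and the identical computation for $s^-$ gives $s^-(G) \ge n - \kappa$. Taking the minimum of the two completes the argument. Equivalently, using $s^+ + s^- = 2m$, the bound $s^- \ge n - \kappa$ is the same as $s^+ \le 2m - n + \kappa$, so the two halves of the inequality are the natural multi-component analogues of $n - 1 \le s^+ \le 2m - n + 1$.

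The main obstacle is that this argument does not establish anything genuinely new: it shows only that Conjecture 2 is a formal consequence of Conjecture 1, the connected case being the instance $\kappa = 1$. The real difficulty is therefore entirely inherited from Conjecture 1, which remains open in general; the multi-component statement is rigorously established precisely for graphs all of whose components belong to one of the classes --- bipartite, regular, complete $q$-partite, hyper-energetic, barbell --- for which Conjecture 1 is proved. The one point requiring a little care is the treatment of components with $n_i = 1$, since these contribute no positive or negative eigenvalues; but as noted they satisfy the bound with equality and cause no trouble in the summation.
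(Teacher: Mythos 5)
Your proposal is correct and is essentially identical to the paper's own argument: decompose $G$ into components, use the block-diagonal structure of the adjacency matrix to get $s^{\pm}(G) = \sum_i s^{\pm}(G_i)$, apply Conjecture 1 to each (connected) component, and sum to obtain $n - \kappa$. Your added remarks --- that the statement is thereby only a formal consequence of the still-open Conjecture 1, and that isolated vertices satisfy the bound trivially --- are accurate and consistent with the paper's framing of this as a conditional proof.
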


\begin{proof}
Let $G_1, ... , G_\kappa$ denote the components of $G$ and let $n_i$ denote the number of vertices in $G_i$. Then

\[
s^-(G) = \sum {s^-(G_i)} \ge \sum (n_i - 1) = n - \kappa,
\]

and similarly for $s^+(G)$.

\end{proof}

\subsection{Comments}

A graph is connected if and only if its adjacency matrix is irreducible. In the language of matrix algebra, this conjecture can therefore be expressed as $min{(s^-, s^+)} \ge n - 1$ for binary, symmetric, irreducible matrices with zero trace.

Note that if $L$ is the Laplacian of $G$, then $n - \kappa =  rank(G) = rank(L) = $ number of positive eigenvalues of $L$.

We have searched the 10,000s of connected named graphs with $6$ to $40$ vertices in Wolfram Mathematica, and all connected graphs with up to 8 vertices, and found no counter-examples. A reviewer of this paper has also
kindly checked all connected graphs with 9 and 10 vertices, and connected graphs with maximum degree four on 11 and 12 vertices and found no counter-example.

Note that for connected graphs, if $s^+ > s^-$ then $s^+ > m \ge n - 1$ and if $s^- > s^+$ then $s^- > m \ge n - 1$. Most, but not all graphs, have $s^+ \ge s^-$. So for any connected graph one half of the conjecture is true.

If we consider the set of connected graphs on $n$ vertices, then it is notable that $s^- = n - 1$ for the graphs with the minimum number of edges (Trees) and the maximum number of edges ($K_n$).
\begin{thm}
Let $G$ be any graph. Then $s^-(G) \le n^2/4$.
\end{thm}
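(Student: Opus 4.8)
The plan is to turn the desired upper bound on $s^-$ into a lower bound on $s^+$ by way of the identity $s^+ + s^- = 2m$ recorded in the introduction, and then to feed in the elementary estimate $\mu_1 \ge 2m/n$ for the spectral radius. Since $s^- = 2m - s^+$, the inequality $s^- \le n^2/4$ is equivalent to $s^+ \ge 2m - n^2/4$, so it suffices to prove a lower bound on $s^+$ of exactly this strength. This reformulation is the one conceptual move in the argument; everything after it is routine.

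First I would dispose of the trivial case: if $G$ has no edges then all $\mu_i = 0$ and $s^- = 0 \le n^2/4$, so we may assume $m \ge 1$ and hence $\mu_1 > 0$. Because $s^+$ is the sum of the squares of the positive eigenvalues and $\mu_1 > 0$ is one of them, $s^+ \ge \mu_1^2$. Next I would invoke the standard Rayleigh-quotient bound, testing $A$ against the all-ones vector:
\[
\mu_1 \ge \frac{\mathbf{1}^T A \mathbf{1}}{\mathbf{1}^T \mathbf{1}} = \frac{2m}{n},
\]
which is just the average degree. As $2m/n \ge 0$, squaring gives $s^+ \ge \mu_1^2 \ge (2m/n)^2$.

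The proof then reduces to the purely algebraic inequality $(2m/n)^2 \ge 2m - n^2/4$, which holds because $(2m/n)^2 - 2m + n^2/4 = (2m/n - n/2)^2 \ge 0$. Combining the steps yields $s^+ \ge 2m - n^2/4$, whence $s^- = 2m - s^+ \le n^2/4$. I do not anticipate a genuine obstacle here; the only point requiring care is arranging the chain so every inequality points the correct way, which is precisely why passing to $s^+$ is essential (it lets the average-degree bound do the work). As a sanity check on tightness, equality forces $\mu_1 = 2m/n$ (a regular graph), a single positive eigenvalue, and $m = n^2/4$, which together single out the balanced complete bipartite graph $K_{n/2,n/2}$, confirming that $n^2/4$ cannot be improved.
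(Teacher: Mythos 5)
Your proof is correct and is essentially the paper's own argument: both rest on the identity $s^+ + s^- = 2m$, the bounds $s^+ \ge \mu_1^2 \ge (2m/n)^2$, and the observation that $(2m/n - n/2)^2 \ge 0$. The paper phrases it as a proof by contradiction while you argue directly (and tidily handle the edgeless case), but the two are the same proof rearranged.
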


\begin{proof}

We use that $\mu_1 \ge 2m/n$ and assume that $s^- > n^2/4$, in which case:

\[
2m = s^+ + s^- \ge \mu_1^2 + s^- \ge \frac{4m^2}{n^2} + s^- > \frac{4m^2}{n^2} + \frac{n^2}{4}.
\]

This rearranges to:

\[
0 > \left(\frac{2m}{n} - \frac{n}{2}\right)^2
\]

which is a contradiction.
\end{proof}

Note that $s^- = \mu_n^2 = n^2/4$ for regular complete bipartite graphs. This bound can be compared with the following bound due to Constantine \cite{constantine85}:

\[
\mu_n^2 \le\left\lfloor \frac{n}{2} \right\rfloor \left\lceil \frac{n}{2} \right\rceil \le \frac{n^2}{4}.
\]

\subsection{An alternative formulation}

The cyclomatic number, $c(G)$, is the minimum number of edges that need to be removed from a graph to make it acyclic. It is well known that $c = m - n + \kappa$, where $\kappa$ is the number of components of a graph.  We can therefore reformulate Conjecture 2 as follows:

\[
m - c \le s^- \le m + c
\]

and similarly for $s^+$. When $c = 0$, $G$ is a forest for which $s^- = s^+ = m$ so the conjecture is true. It may therefore be possible to prove the conjecture using induction on $c$.

\section{Bounds using graph energy}

\begin{lem}

Let $\tau = |\mu_n|$ and $E$ denote the energy of a graph. Then

\[
s^- \le \frac{\tau E}{2}.
\]

\end{lem}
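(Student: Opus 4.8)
The plan is to exploit the elementary relation between the sum of squares of the negative eigenvalues and their absolute values, using the fact that $\tau$ dominates every negative eigenvalue in absolute value. Recall that $s^- = \sum_{i=n-\nu+1}^n \mu_i^2$ runs precisely over the negative eigenvalues $\mu_{n-\nu+1}, \ldots, \mu_n$, and that $\tau = |\mu_n|$ is the largest of the quantities $|\mu_i|$ among these, since $\mu_n$ is the most negative eigenvalue. The key observation is therefore that $|\mu_i| \le \tau$ for every index $i$ in the range of the sum.

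First I would rewrite each term of $s^-$ as $\mu_i^2 = |\mu_i| \cdot |\mu_i|$ and bound one of the two factors by $\tau$, which gives
\[
s^- = \sum_{i=n-\nu+1}^n |\mu_i|^2 \le \tau \sum_{i=n-\nu+1}^n |\mu_i|.
\]
Next I would identify the remaining sum. As established in the introduction, the vanishing of the trace forces $\sum_{i=n-\nu+1}^n \mu_i = -E/2$, so that the sum of the absolute values of the negative eigenvalues is exactly $\sum_{i=n-\nu+1}^n |\mu_i| = E/2$.

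Substituting this identity into the displayed inequality yields $s^- \le \tau E/2$ directly, completing the argument. The proof is essentially a one-line estimate once the two ingredients are in place; there is no genuine obstacle to overcome. If anything, the only point requiring care is confirming that $\tau$ really is the maximal absolute value over the negative part of the spectrum, which is immediate from the ordering $\mu_1 \ge \cdots \ge \mu_n$ and the fact that the summation index set consists exactly of the negative eigenvalues. One could equivalently phrase the whole estimate as an instance of the inequality $\sum x_i^2 \le (\max_i x_i)(\sum x_i)$ applied to the nonnegative numbers $x_i = |\mu_i|$.
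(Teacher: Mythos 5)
Your proof is correct and is essentially the paper's own argument: both bound each term $\mu_i^2$ by $\tau|\mu_i|$ and then use the trace identity $\sum_{i=n-\nu+1}^n |\mu_i| = E/2$. The only cosmetic difference is that you work with absolute values while the paper writes the same estimate in signed form, $\mu_i^2 \le \mu_n\mu_i$, summing to $\mu_n \sum \mu_i = \tau E/2$.
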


\begin{proof}
\[
 s^- = \sum_{i=n-\nu+1}^n \mu_i^2 \le \mu_n \sum_{i=n-\nu+1}^n \mu_i = \frac{\tau E}{2}.
\]

\end{proof}

Similarly, $s^+ \le \mu_1E/2$.

\begin{lem}\label{lem:lowerboundsm}
Let $\nu$ denote the number of negative eigenvalues of a graph with energy $E$. Then:

\[
s^- \ge \frac{E^2}{4\nu}.
\]

\begin{proof}

Using Cauchy-Schwartz:

\[
s^- = \sum_{i=n-\nu+1}^n \mu_i^2 \ge \frac{(\sum_{i=n-\nu+1}^n \mu_i)^2}{\nu} = \frac{E^2}{4\nu}.
\]

\end{proof}

\end{lem}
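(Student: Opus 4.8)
The plan is to combine the energy--trace identity recorded in the introduction with a single application of the Cauchy--Schwarz inequality. First I would recall that, because $tr(A) = 0$ and $E = \sum_{i} |\mu_i|$, the negative eigenvalues contribute exactly $-E/2$ to the trace; that is, $\sum_{i=n-\nu+1}^n \mu_i = -E/2$. This is precisely the identity stated just after the definition of $E$, so it may be quoted directly rather than re-derived.

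Next I would view $s^-$ as the squared Euclidean norm of the $\nu$-dimensional vector $v = (\mu_{n-\nu+1}, \dots, \mu_n)$ whose entries are the negative eigenvalues, and pair $v$ with the all-ones vector $\mathbf{1}$ of the same length. Cauchy--Schwarz then gives $\langle v, \mathbf{1}\rangle^2 \le \|v\|^2 \|\mathbf{1}\|^2$, i.e.
\[
\left(\sum_{i=n-\nu+1}^n \mu_i\right)^2 \le \nu \, s^-.
\]
Rearranging and substituting the identity from the first step turns the left-hand side into $(E/2)^2$, yielding $s^- \ge (E/2)^2/\nu = E^2/(4\nu)$, as claimed. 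The same inequality can equivalently be phrased as the quadratic-mean versus arithmetic-mean comparison applied to the magnitudes $|\mu_{n-\nu+1}|, \dots, |\mu_n|$ of the negative eigenvalues.

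I do not anticipate a genuine obstacle: the argument is essentially a one-line invocation of Cauchy--Schwarz, and the only point needing care is the bookkeeping that matches the sum of the $\nu$ negative eigenvalues with $-E/2$ rather than with the full (vanishing) trace. It is worth noting that equality holds exactly when all negative eigenvalues coincide, which identifies the graphs for which the bound is tight; this equality analysis is what makes the lemma potentially useful when one wants to compare the energy-based lower bound on $s^-$ with the conjectured bound $s^- \ge n-1$.
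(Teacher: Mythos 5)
Your proposal is correct and is essentially identical to the paper's own proof: both apply Cauchy--Schwarz to the vector of negative eigenvalues against the all-ones vector and then substitute the trace identity $\sum_{i=n-\nu+1}^n \mu_i = -E/2$ to obtain $s^- \ge E^2/(4\nu)$. Your added remark on the equality case (all negative eigenvalues equal) is a harmless bonus not present in the paper.
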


Similarly, $s^+ \ge E^2/4\pi$.

\begin{lem}\label{lem:boundnu}
Let $G$ be a graph for which $m \ge \nu(n - 1)$, where $\nu$ is the number of negative eigenvalues. Then

\[
s^- \ge n - 1.
\]

\begin{proof}
Brualdi \cite{brualdi06} proved that $E \ge 2\sqrt{m}$. Therefore using Lemma~\ref{lem:lowerboundsm}:

\[
s^-  \ge \frac{E^2}{4\nu} \ge \frac{m}{\nu} \ge n - 1.
\]

\end{proof}

\end{lem}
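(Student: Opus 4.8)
The plan is to obtain $s^- \ge n-1$ by chaining together a few inequalities and to invoke the hypothesis $m \ge \nu(n-1)$ only at the very end. The natural starting point is Lemma~\ref{lem:lowerboundsm}, which already supplies $s^- \ge E^2/(4\nu)$. This reduces the task to showing that $E^2/(4\nu) \ge n-1$, equivalently $E^2 \ge 4\nu(n-1)$. Since $\nu$ sits in the denominator, the quantity I now need to control from below is the energy $E$, and I want to do so in a way that introduces the edge count $m$ into the chain.

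First I would reach for a lower bound on $E$ expressed purely in terms of $m$, so that $m$ becomes available for comparison against the hypothesis. The cleanest such estimate is $E \ge 2\sqrt{m}$, a standard energy inequality that can be cited from Brualdi~\cite{brualdi06}; squaring gives $E^2 \ge 4m$. Substituting this into the consequence of Lemma~\ref{lem:lowerboundsm} yields
\[
s^- \ge \frac{E^2}{4\nu} \ge \frac{4m}{4\nu} = \frac{m}{\nu}.
\]

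Finally I would apply the hypothesis $m \ge \nu(n-1)$. Dividing by $\nu$, which is positive for any graph with at least one edge since $tr(A)=0$ forces the existence of a negative eigenvalue, gives $m/\nu \ge n-1$, and combining with the previous display finishes the argument. I do not anticipate a genuine obstacle: the whole proof is a monotone chain of elementary inequalities, and the only nontrivial ingredient is the imported energy bound $E \ge 2\sqrt{m}$. The single point deserving a word of care is the role of the hypothesis itself, which is engineered precisely so that the final division by $\nu$ lands exactly on $n-1$; it is the condition $m \ge \nu(n-1)$ that pins the bound at the conjectured value.
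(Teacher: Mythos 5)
Your proposal is correct and follows exactly the paper's own argument: Lemma~\ref{lem:lowerboundsm}, then Brualdi's bound $E \ge 2\sqrt{m}$, then the hypothesis $m \ge \nu(n-1)$, chained in the same order. The only difference is cosmetic: you explicitly note that $\nu > 0$ when the graph has an edge, a point the paper leaves implicit.
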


Similarly if $m \ge \pi(n - 1)$ then $s^+ \ge n - 1$.

\section{Proofs for various classes of graphs}

In this section we prove the conjecture for bipartite, regular, complete $q$-partite, hyperenergetic, and barbell  graphs. We have also proved that $s^{-} \geq n-1$ for graphs with precisely two negative eigenvalues and smallest degree at least 2, and we present it in section 5.

The proof for barbells is of interest, since $2K_k$ is an example of a disconnected graph for which $s^- = 2k - 2 < n - 1$. The "closest" connected graph to $2K_k$ is the barbell on $2k$ vertices. The proof for hyperenergetic graphs is of interest because almost all graphs are hyperenergetic.

\subsection{Bipartite graphs}

\begin{thm}\label{thm:bipart}
Let $G$ be a connected bipartite graph. Then $\min{(s^-, s^+)} \ge n - 1$.

\end{thm}
\begin{proof}
The spectrum of bipartite graphs is symmetrical about zero. Therefore:

\[
s^+ = s^- = m \ge n - 1.
\]

Note that for Trees, $m = n - 1$ so for these bipartite graphs the conjecture is exact.
\end{proof}
Note that in the theorem above, it is enough to assume that $G$ is a bipartite graph with at least $n-1$ edges.
\subsection{Regular graphs}

\begin{thm}
Let $G$ be a connected regular graph. Then $\min(s^-, s^+) \ge n - 1$.

\end{thm}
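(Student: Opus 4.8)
The plan is to exploit the two features special to the regular case: the top eigenvalue is pinned at $\mu_1 = d = 2m/n$ with multiplicity one, and $2m = nd$. First I would record the easy half. Since $\mu_1 = d$ is a positive eigenvalue, $s^+ \ge \mu_1^2 = d^2$, which already gives $s^+ \ge n-1$ whenever $d \ge \sqrt{n-1}$. Moreover, as noted in the Comments, the larger of $s^+$ and $s^-$ always exceeds $m \ge n-1$, so in every case it is only the smaller of the two that needs attention. Thus the task reduces to establishing the bound for $s^-$, and, in the sparse range $d < \sqrt{n-1}$, also for $s^+$.

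A natural first attempt is the energy route of Lemma~\ref{lem:lowerboundsm}: $s^- \ge E^2/(4\nu)$ and $s^+ \ge E^2/(4\pi)$. Since $\pi,\nu \le n-1$ for a connected graph with $\mu_1 > 0$, it would suffice to prove an energy lower bound of the shape $E \ge 2\sqrt{(n-1)\max(\pi,\nu)}$; in particular $E \ge 2(n-1)$ would close everything at once. This is attractive because it is exactly tight at $K_n$, where $E = 2(n-1)$, $\nu = n-1$, and $s^- = n-1$.

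I expect this to be exactly where the real difficulty lies, because the energy bound is simply too weak for sparse regular graphs. For a long cycle $E$ is of order $n$ while $\nu$ is also of order $n$, so $E^2/(4\nu)$ has the wrong constant and falls short of $n-1$. The reason is that for such graphs $s^-$ is large not because the negative spectrum is concentrated (the regime in which Cauchy--Schwarz is tight) but because it is spread out. Any proof must therefore use more than the first two moments: it must use that $A$ is a $0/1$ matrix with constant row sum $d$. A concrete way to bring this in is through the fourth moment $tr(A^4) = nd^2 + \sum_{i \ne j} codeg(i,j)^2$, which keeps the eigenvalues away from $0$ and so pushes $s^-$ up.

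An alternative packaging, which I find cleaner to aim at, starts from $s^{\pm} = m \pm \frac{1}{2} tr(A|A|)$, so that $\min(s^+,s^-) = m - \frac{1}{2}|tr(A|A|)|$ and the statement becomes equivalent to $|tr(A|A|)| \le 2(m-n+1)$, twice the cyclomatic number. For a regular graph $tr(A|A|) = 2\sum_{ij \in E}|A|_{ij}$, so the whole problem reduces to controlling the edge-sum of the entries of $|A| = (A^2)^{1/2}$. This dovetails with the induction-on-$c$ strategy suggested after the alternative formulation of the conjecture: forests ($c=0$) give $tr(A|A|)=0$, and one would hope to show that inserting an edge (raising $c$ by one) changes $tr(A|A|)$ by at most $2$. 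Making either the fourth-moment estimate or this edge-by-edge control rigorous for all connected regular graphs is the step I expect to be hard, since it is precisely the point at which the combinatorial $0/1$ structure, rather than generic spectral inequalities, has to be used.
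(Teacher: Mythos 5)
Your proposal is not a proof: it is a correct set of reductions followed by two candidate strategies, neither of which is carried out, and you say so yourself (``the step I expect to be hard''). The reductions are fine --- the larger of $s^+,s^-$ exceeds $m \ge n-1$, the identity $s^{\pm} = m \pm \tfrac{1}{2}\,tr(A|A|)$ is valid, and your diagnosis that Lemma~\ref{lem:lowerboundsm} fails in the sparse regime is accurate (for $C_n$ one has $E \approx 4n/\pi$ and $\nu \approx n/2$, so $E^2/4\nu \approx 0.81\,n < n-1$). But the entire content of the theorem is precisely the bound you leave open: controlling $|tr(A|A|)|$ by $2(m-n+1)$, or equivalently getting $\min(s^+,s^-) \ge n-1$ for sparse connected regular graphs. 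Neither the fourth-moment idea nor the induction-on-cyclomatic-number idea is developed to the point where one could check it; as stated, there is no argument to verify.

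The missing idea, as the paper's proof shows, is to route the estimate through the chromatic number rather than through energy or trace identities. Ando and Lin \cite{ando15} proved $\chi \ge 1 + s^+/s^-$ and $\chi \ge 1 + s^-/s^+$, which rearranges to $\min(s^+,s^-) \ge (s^+ + s^-)/\chi = 2m/\chi$. For a connected $d$-regular graph that is neither complete nor an odd cycle, Brooks' theorem gives $\chi \le \Delta = d$, hence $\min(s^+,s^-) \ge 2m/d = n$. The two exceptional families are then handled directly: $K_n$ has $s^- = n-1$ exactly, and odd cycles are checked by explicit computation. Note how this sidesteps exactly the obstruction you identified: the Ando--Lin inequality is not a two-moment bound, so it does not degrade on spread-out negative spectra the way Cauchy--Schwarz does, and regularity enters only through the trivial equality $\Delta = 2m/n$. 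If you want to salvage your own framework, the lesson is that the $0/1$ combinatorial structure gets injected via a coloring argument, not via higher trace moments.
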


\begin{proof}

Ando and Lin \cite{ando15} proved a conjecture due to Wocjan and Elphick \cite{wocjan13} that:

\[
1 + \frac{s^+}{s^-} \le \chi(G) \mbox{ and that } 1 + \frac{s^-}{s^+} \le \chi(G).
\]

Brooks \cite{brooks41} proved that if $G$ is a connected graph and is neither an odd cycle nor a complete graph, then $\chi \le \Delta$, where $\Delta$ is the maximum degree.

Therefore if $G$ is  a $d-$regular connected graph and neither an odd cycle nor a complete graph then:

\[
s^- \ge \frac{s^- + s^+}{\chi(G)} = \frac{2m}{\chi(G)} \ge \frac{2m}{\Delta} = \frac{2m}{d} = n.
\]

If $G$ is a complete graph then $s^- = n - 1$.

If $G$ is an odd cycle then $2m = 2n$. If $G = C_5$ then $s^- = 5.236$ which falls between $n - 1$ and $n + 1$. For larger odd cycles $s^-$ and $s^+$ rapidly converge to $n$.

A very similar proof is used to demonstrate $s^+ \ge n - 1$.
\end{proof}

\subsection{Complete $q$-partite graphs}

\begin{thm}
Let $G$ be a complete $q$-partite graph. Then $\min{(s^-, s^+)} \ge n - 1$.
\label{thm:completeqpartite}
\end{thm}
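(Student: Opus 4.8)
The plan is to first pin down the inertia of a complete $q$-partite graph, which makes both halves of the bound almost immediate. Write $A = J_n - \bigoplus_{i=1}^q J_{n_i}$, where $J$ denotes an all-ones block and the parts have sizes $n_1,\dots,n_q$ with $\sum_i n_i = n$. Every vector that sums to zero on each part lies in the kernel of $A$, and this subspace has dimension $n-q$. On the complementary space of vectors that are constant on each part, $A$ acts as a $q\times q$ quotient matrix which, after conjugation by $\mathrm{diag}(\sqrt{n_1},\dots,\sqrt{n_q})$, becomes the symmetric matrix $\tilde B = \sqrt{\mathbf n}\,\sqrt{\mathbf n}^{T} - \mathrm{diag}(n_1,\dots,n_q)$, where $\mathbf n=(n_1,\dots,n_q)^{T}$ and $\sqrt{\mathbf n}$ has entries $\sqrt{n_i}$. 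Since $\tilde B$ is a rank-one positive semidefinite perturbation of a negative definite matrix, interlacing forces at most one positive eigenvalue, while $tr(\tilde B)=0$ forces at least one; hence $\tilde B$, and therefore $A$, has exactly one positive eigenvalue. The upshot is the key identity $s^+ = \mu_1^2$ and $s^- = 2m - \mu_1^2$.

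Given this identity, the inequality $s^- \ge n-1$ is immediate. Hong's bound gives $\mu_1^2 \le 2m-n+1$, so $s^- = 2m - \mu_1^2 \ge n-1$. Connectedness, which Hong's theorem requires, holds automatically because $q \ge 2$.

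For the harder half, $s^+ = \mu_1^2 \ge n-1$, I would bound $\mu_1$ from below by exhibiting a complete bipartite subgraph. Grouping the part $V_1$ against the union of the remaining parts displays $K_{n_1,\,n-n_1}$ as a spanning subgraph of $G$, since every vertex of $V_1$ is adjacent in $G$ to every vertex outside $V_1$. By monotonicity of the spectral radius under adding edges (Perron--Frobenius for nonnegative matrices), $\mu_1(G) \ge \mu_1(K_{n_1,\,n-n_1}) = \sqrt{n_1(n-n_1)}$, and since $1 \le n_1 \le n-1$ we always have $n_1(n-n_1) \ge n-1$. Hence $s^+ = \mu_1^2 \ge n-1$, with equality exactly for the star $K_{1,n-1}$.

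The step I expect to require the most care is the inertia computation, namely verifying rigorously that $\pi = 1$; in particular one must rule out a spurious zero among the $q-1$ non-Perron eigenvalues of $\tilde B$. The matrix determinant lemma settles this, giving $\det \tilde B = (-1)^q(1-q)\prod_i n_i \neq 0$ for $q \ge 2$, so $\tilde B$ is nonsingular and the inertia is exactly $(1,\,q-1,\,n-q)$. Everything else reduces to Hong's theorem and the elementary monotonicity of the spectral radius, so once the inertia is established the proof is short.
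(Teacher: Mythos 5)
Your proof is correct, and it splits into one half that matches the paper and one half that genuinely diverges. For $s^- \ge n-1$ you argue exactly as the paper does: exactly one positive eigenvalue gives $s^+ = \mu_1^2$, and Hong's bound $\mu_1^2 \le 2m-n+1$ finishes it. The difference there is only that the paper cites the one-positive-eigenvalue fact as known (it is Smith's characterization of complete multipartite graphs, referenced in Section 5), whereas you prove it from scratch via the quotient matrix $\tilde B = \sqrt{\mathbf n}\,\sqrt{\mathbf n}^{T} - \mathrm{diag}(n_1,\dots,n_q)$; your computation is sound, including the determinant check $\det \tilde B = (-1)^q(1-q)\prod_i n_i$ that pins the inertia at $(1,q-1,n-q)$. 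For $s^+ \ge n-1$ the routes differ: the paper invokes Lemma~\ref{lem:boundnu} (Brualdi's energy bound $E \ge 2\sqrt{m}$ plus Cauchy--Schwarz), which with $\pi = 1$ needs only $m \ge n-1$, automatic for a connected graph; you instead exhibit the spanning subgraph $K_{n_1,\,n-n_1}$ and use edge-monotonicity of the spectral radius to get $\mu_1^2 \ge n_1(n-n_1) \ge n-1$. Your route is more elementary and self-contained (no energy machinery at all), yields the sharper explicit bound $\mu_1^2 \ge \max_i n_i(n-n_i)$, and identifies the equality case (stars); the paper's route is shorter given that Lemma~\ref{lem:boundnu} is already developed and reused elsewhere (barbells, hyper-energetic graphs), and it generalizes at once to any graph with $m \ge \pi(n-1)$ rather than relying on the specific multipartite structure.
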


\begin{proof}
A complete $q$-partite graph has precisely one positive eigenvalue. Therefore using Hong's \cite{hong88} bound

\[
s^+ = \mu_1^2 \le 2m - n + 1 \mbox{ which implies that } s^- \ge n - 1.
\]

Using Lemma~\ref{lem:boundnu}, $\pi = 1$ so $m \ge \pi(n - 1)$ and hence $s^+ \ge n -1$.

\end{proof}
\subsection{Hyper-energetic graphs}

A graph is said to be hyper-energetic if $E > 2(n - 1)$. Nikiforov \cite{nikiforov07} proved that almost all graphs are hyper-energetic and many classes of graphs are hyper-energetic. As a result, Kneser graphs and their complements, Paley and ciculant graphs, line graphs of regular graphs and line graphs of any graph with $m > 2n - 1$ all satisfy the conjecture.

\begin{thm}
Let $G$ be a hyper-energetic graph. Then $\min{(s^-, s^+)} > n - 1. $

\begin{proof}
Using Lemma~\ref{lem:lowerboundsm}

\[
\min{(s^-, s^+)} \ge \min{\left(\frac{E^2}{4\nu}, \frac{E^2}{4\pi}\right)} > (n - 1)^2 \min{\left(\frac{1}{\nu}, \frac{1}{\pi}\right)} \ge n - 1.
\]

\end{proof}

\end{thm}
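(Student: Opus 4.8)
The plan is to feed the hyper-energetic hypothesis $E > 2(n-1)$ directly into the Cauchy–Schwarz lower bounds already established in Lemma~\ref{lem:lowerboundsm}. That lemma supplies $s^- \ge E^2/(4\nu)$, and the symmetric statement following it gives $s^+ \ge E^2/(4\pi)$. First I would bound the minimum of the two target quantities from below by $\min\!\left(E^2/(4\nu),\, E^2/(4\pi)\right)$, and then substitute $E^2 > 4(n-1)^2$ to obtain $\min(s^-,s^+) > (n-1)^2 \min(1/\nu,\, 1/\pi)$.

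What remains is to verify that $(n-1)^2 \min(1/\nu,1/\pi) \ge n-1$, equivalently that $\max(\nu,\pi) \le n-1$. This is the one place where I would invoke the sign structure of the adjacency spectrum rather than a mere substitution. Because $G$ is hyper-energetic we have $E > 2(n-1) \ge 0$, so $G$ has at least one edge; hence $\mathrm{tr}(A^2) = 2m > 0$, while $\mathrm{tr}(A) = 0$ forces $A$ to possess both a strictly positive and a strictly negative eigenvalue. Therefore $\pi \ge 1$ and $\nu \ge 1$, and from $\pi + \nu + \gamma = n$ we deduce $\nu \le n - \pi \le n-1$ and $\pi \le n - \nu \le n-1$, so that $\max(\nu,\pi) \le n-1$. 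Since $\nu,\pi$ are positive integers, $\min(1/\nu,1/\pi) = 1/\max(\nu,\pi) \ge 1/(n-1)$, and chaining the inequalities gives $\min(s^-,s^+) > n-1$.

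The computation is short, and the step I would treat most carefully is the justification that both $\pi \ge 1$ and $\nu \ge 1$, which is exactly what prevents $\nu$ or $\pi$ from reaching $n$ and thereby keeps the factor $(n-1)^2/\max(\nu,\pi)$ at least $n-1$. This rests only on $G$ having an edge (automatic from hyper-energeticity) together with the zero-trace condition; everything else is a direct appeal to Lemma~\ref{lem:lowerboundsm}. I expect no genuine obstacle here, the strict inequality in the conclusion coming for free from the strict inequality $E > 2(n-1)$.
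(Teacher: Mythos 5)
Your proof is correct and takes essentially the same route as the paper: both feed the hyper-energetic hypothesis $E > 2(n-1)$ into Lemma~\ref{lem:lowerboundsm} and conclude via $(n-1)^2\min(1/\nu,1/\pi) \ge n-1$. The only difference is that you explicitly justify the final step (that $\pi \ge 1$ and $\nu \ge 1$, hence $\max(\nu,\pi) \le n-1$), which the paper leaves implicit.
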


In the theorem above, it is actually enough to assume that $E \geq 2(n - 1)$, as the same proof (with weak inequality instead of strict) works in this case as well. In fact, the following stronger version of the theorem holds as well:
\begin{thm}
Let $G$ be a graph for which $E\geq 2n-3$. Then $\min{(s^-, s^+)} \geq n - 1. $

\begin{proof}
We already proved that $\min{(s^-, s^+)} > n - 1$ for complete graphs, so assume that $G \neq K_n$. Then $\max(\pi,\nu) \leq n-2$. Using Lemma~\ref{lem:lowerboundsm}, we have

\[
\min{(s^-, s^+)} \ge \min{\left(\frac{E^2}{4\nu}, \frac{E^2}{4\pi}\right)} \geq (n - 1.5)^2 \min{\left(\frac{1}{\nu}, \frac{1}{\pi}\right)} \geq \frac{(n - 1.5)^2}{n-2} \ge n - 1.
\]

\end{proof}
\end{thm}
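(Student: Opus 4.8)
The plan is to route everything through the lower bound of Lemma~\ref{lem:lowerboundsm}. That lemma gives $s^-\ge E^2/(4\nu)$ and $s^+\ge E^2/(4\pi)$, so
\[
\min(s^-,s^+)\ \ge\ \frac{E^2}{4\max(\pi,\nu)}\ \ge\ \frac{(2n-3)^2}{4\max(\pi,\nu)},
\]
using the hypothesis $E\ge 2n-3$. Since $(2n-3)^2=4\left(n-\frac{3}{2}\right)^2$, the entire theorem reduces to a good upper bound on $\max(\pi,\nu)$: if I can show $\max(\pi,\nu)\le n-2$, then $\min(s^-,s^+)\ge \left(n-\frac{3}{2}\right)^2/(n-2)$, and the inequality $\left(n-\frac{3}{2}\right)^2=n^2-3n+\frac{9}{4}\ge n^2-3n+2=(n-1)(n-2)$ closes the argument in one line. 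The complete graph needs separate treatment, but it is immediate: $K_n$ has $s^-=n-1$, so $\min(s^-,s^+)=n-1$ and the bound already holds. Hence I may assume $G\ne K_n$, and (since $E\ge 2n-3>0$ for $n\ge 2$) that $G$ has an edge, so that $\pi\ge 1$ and $\nu\ge 1$; the cases $n\le 2$ are checked by hand.

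So the crux is the claim: if $G\ne K_n$ then $\max(\pi,\nu)\le n-2$. Because $\pi+\nu\le n$, the only way to fail this is $\min(\pi,\nu)=1$, so I would split into $\pi=1$ and $\nu=1$. If instead $\min(\pi,\nu)\ge 2$, then $\max(\pi,\nu)=(\pi+\nu)-\min(\pi,\nu)\le n-2$ with nothing to prove.

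For the case $\pi=1$, I would invoke the classical characterization that a graph has exactly one positive eigenvalue precisely when its non-isolated part is complete multipartite. A complete multipartite graph $K_{n_1,\dots,n_q}$ carries a zero eigenvalue of multiplicity $n-q$, and isolated vertices contribute further zeros, so $\gamma=0$ forces $q=n$ and no isolated vertices, i.e.\ $G=K_n$. Thus for $G\ne K_n$ we get $\gamma\ge 1$ and hence $\nu=n-1-\gamma\le n-2$. For the case $\nu=1$, I would use Cauchy interlacing: an induced triangle would (via the eigenvalue $-1$ of $K_3$) force a second negative eigenvalue, so $\nu=1$ makes $G$ triangle-free; and a connected triangle-free graph on at least three vertices contains an induced $P_3$, whose middle eigenvalue $0$ interlaces to give $\mu_{n-1}\le 0$, so $\gamma\ge 1$ and $\pi\le n-2$. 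Isolated vertices and extra components only enlarge $\gamma$, so the same conclusion persists for disconnected $G$.

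The routine parts --- the numerical inequality $\frac{9}{4}\ge 2$ and the application of Lemma~\ref{lem:lowerboundsm} --- are trivial, so the real work, and the main obstacle, is the structural statement that $\max(\pi,\nu)=n-1$ forces $G=K_n$. This is where one must bring in external facts about graphs with a single positive or single negative eigenvalue; the $\pi=1$ branch leans on the complete-multipartite characterization, while the $\nu=1$ branch can be settled self-containedly by interlacing against $K_3$ and $P_3$, which is the cleaner of the two to write out in full.
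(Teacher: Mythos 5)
Your proof is correct and follows essentially the same route as the paper: treat $K_n$ separately, show $\max(\pi,\nu)\le n-2$ for $G\ne K_n$, then combine Lemma~\ref{lem:lowerboundsm} with the hypothesis $E\ge 2n-3$ and the numerical inequality $\left(n-\frac{3}{2}\right)^2\ge(n-1)(n-2)$. The only difference is that the paper merely asserts the step $\max(\pi,\nu)\le n-2$ for $G\ne K_n$, whereas you actually justify it (via Smith's complete-multipartite characterization for $\pi=1$ and interlacing against $K_3$ and $P_3$ for $\nu=1$), which is a sound and welcome filling-in of a detail the paper leaves implicit.
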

\subsection{Barbell graphs}

\begin{thm}
Let $G$ be the barbell graph on $n = 2k$ vertices, with $k \ge 3$. Then $min{(s^-, s^+)} \ge n - 1. $

\end{thm}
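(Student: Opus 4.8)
The plan is to compute the spectrum of the barbell explicitly, using its mirror symmetry, and then to check the two inequalities by hand. Label the two cliques so that the bridge joins vertex $1$ of the first $K_k$ to vertex $1$ of the second. Then the adjacency matrix has the block form
\[
A = \left( \begin{array}{cc} B & C \\ C & B \end{array} \right),
\]
where $B = J_k - I_k$ is the adjacency matrix of $K_k$ and $C = E_{11}$ is the matrix whose only nonzero entry is a $1$ in position $(1,1)$. Since $C$ is symmetric, conjugating $A$ by the orthogonal involution $\tfrac{1}{\sqrt 2}\left(\begin{array}{cc} I & I \\ I & -I\end{array}\right)$ block-diagonalizes it, so that the spectrum of $A$ is the union of the spectra of $M_+ = B + C$ and $M_- = B - C$.

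Each $M_\pm = (J_k - I_k) \pm E_{11}$ is a rank-one modification of $J_k - I_k$ supported on the first coordinate, so its eigenvectors split into two families: those that are constant on the coordinates $2,\dots,k$ (a two-dimensional invariant subspace spanned by $e_1$ and the all-ones vector on coordinates $2,\dots,k$), and those supported on coordinates $2,\dots,k$ with zero sum, on which $M_\pm$ acts as $-I$. The second family contributes the eigenvalue $-1$ with multiplicity $k-2$ to each of $M_+$ and $M_-$; the first family contributes the two roots of an explicit quadratic, which a short computation gives as
\[
\lambda^2 - (k-1)\lambda - 1 = 0 \quad \mbox{and} \quad \lambda^2 - (k-3)\lambda - (2k-3) = 0.
\]
In each quadratic the product of the roots is negative, so each has exactly one positive and one negative root. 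Hence the barbell has exactly $\pi = 2$ positive eigenvalues, and its negative eigenvalues are the two negative roots $\lambda_+^{(2)}, \lambda_-^{(2)}$ together with $2k-4$ copies of $-1$.

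This reduces the claim for $s^-$ to a single scalar inequality: since $s^- = (\lambda_+^{(2)})^2 + (\lambda_-^{(2)})^2 + (2k-4)$, the bound $s^- \ge n-1 = 2k-1$ is equivalent to $(\lambda_+^{(2)})^2 + (\lambda_-^{(2)})^2 \ge 3$. The cleanest route is to establish the single estimate $(\lambda_-^{(2)})^2 \ge 3$; using the sum and product of the roots of the second quadratic, this reduces to
\[
(k-3)(k+1) \ge (k-3)\sqrt{k^2 + 2k - 3},
\]
which for $k>3$ follows after clearing the nonnegative factor $k-3$ from the trivial $(k+1)^2 \ge k^2 + 2k - 3$, with equality only at $k=3$. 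Since $(\lambda_+^{(2)})^2 > 0$ this yields $s^- > n-1$. For the other half, the same spectral data give $\lvert\lambda_-^{(2)}\rvert < 2$ and $\lvert\lambda_+^{(2)}\rvert < 1$, whence $s^- < 2k+1 \le m = k^2-k+1$; then $s^+ = 2m - s^- > m \ge n-1$ by the remark already recorded for connected graphs.

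The genuinely delicate step is the spectral computation itself — setting up the block decomposition correctly and pinning down the signs and multiplicities of the eigenvalues of $M_+$ and $M_-$. Once the spectrum is in hand both inequalities are routine, the $s^-$ bound collapsing to $(k+1)^2 \ge k^2 + 2k - 3$. The one case to watch is $k=3$, where $(\lambda_-^{(2)})^2 = 3$ exactly and the entire margin in $s^-$ comes from the small term $(\lambda_+^{(2)})^2$; this is exactly what one expects, since the barbell is the connected graph "closest" to $2K_k$, for which $s^- = n-2$.
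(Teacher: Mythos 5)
Your proof is correct, and it diverges from the paper's in two useful ways, so it is worth comparing the routes. The paper quotes the characteristic polynomial of the barbell from \cite{cvetkovic95} and reads off the eigenvalues, whereas your symmetry block-diagonalization of $A$ into $B+C$ and $B-C$ rederives the spectrum from scratch; your quadratics $\lambda^2-(k-1)\lambda-1$ and $\lambda^2-(k-3)\lambda-(2k-3)$ have exactly the paper's roots $\tfrac{1}{2}\bigl[k-1\pm\sqrt{k^2-2k+5}\,\bigr]$ and $\tfrac{1}{2}\bigl[k-3\pm\sqrt{k^2+2k-3}\,\bigr]$, so this step just makes the argument self-contained. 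For $s^-$ your argument is in substance identical to the paper's: its key inequality $(k-3-\sqrt{k^2+2k-3})^2 \ge 12$ is precisely your $(\lambda_-^{(2)})^2 \ge 3$ multiplied by $4$, with the same equality case $k=3$ rescued by the strictly positive term $(\lambda_+^{(2)})^2$; you merely carry out the algebra explicitly via $(k-3)(k+1)\ge(k-3)\sqrt{k^2+2k-3}$ where the paper calls it straightforward. For $s^+$ you genuinely part ways: the paper invokes Lemma~\ref{lem:boundnu} (since $\pi=2$, the hypothesis $m \ge 2(n-1)$ gives $s^+\ge n-1$), but that hypothesis only holds for $k\ge 5$, forcing separate verification of $k=3$ and $k=4$; you instead use the eigenvalue estimates $|\lambda_+^{(2)}|<1$ and $|\lambda_-^{(2)}|<2$ to get $s^- < 2k+1 \le m$, and then $s^+ = 2m - s^- > m \ge n-1$. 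Your route handles all $k \ge 3$ uniformly with no case-checking, at the modest cost of two extra (easily verified) bounds on the negative eigenvalues, and is arguably the cleaner of the two.
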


\begin{proof}
The characteristic polynomial of the barbell with $2k$ vertices is \cite{cvetkovic95}(Theorem 2.11)

\[
(x + 1)^{2k-4}[(x + 1)^2(x - k + 1)^2 - (x - k + 2)^2]
\]

which simplifies to

\[
(x + 1)^{2k-4}[x^4 + (4 - 2k)x^3 + (k^2 - 6k + 5)x^2 + (2k^2 - 4k)x + 2k - 3].
\]

Hence its eigenvalues are

\[
\frac{1}{2}[k - 1 - \sqrt{5 - 2k + k^2}] \mbox{  ,  } \frac{1}{2}[k - 1 + \sqrt{5 - 2k + k^2}]
\]
\[
\frac{1}{2}[k - 3 - \sqrt{-3 + 2k + k^2}] \mbox{  ,  } \frac{1}{2}[k - 3 + \sqrt{-3 + 2k + k^2}]
\]

and $-1$ with multiplicity $2k - 4$.

\subsubsection{$s^+ \ge n - 1$}

$G$  has precisely two positive eigenvalues. Therefore using Lemma~\ref{lem:boundnu}

\[
\pi(n - 1) = 2(n - 1) \le m \mbox{ for } k \ge 5
\]

so $s^+ \ge n - 1$ for $k \ge 5$. It is easy to verify that $s^+ \ge n - 1$ for $k = 3$ and $k = 4$.

\subsubsection{$s^- \ge n - 1$}

We are seeking to prove that

\[
s^- = 2k - 4 + \frac{(k - 1 - \sqrt{5 - 2k + k^2})^2}{4} + \frac{(k - 3 - \sqrt{-3 + 2k + k^2})^2}{4} \ge 2k - 1 = n - 1.
\]

This simplifies to :
\[
(k - 1 - \sqrt{5 - 2k + k^2})^2 + (k - 3 - \sqrt{-3 + 2k + k^2})^2 > (k - 3 - \sqrt{-3 + 2k + k^2})^2 \ge 12.
\]

If $k = 3$ then $(k - 3 - \sqrt{-3 + 2k + k^2})^2$ = $12$ and for $k \ge 4$ it is straightforward that $(k - 3 - \sqrt{-3 + 2k + k^2})^2 > 12.$
\end{proof}

\section{Graphs with two negative eigenvalues}
In this section, we deal with graphs with two negative eigenvalues. Our main result in this section is Theorem~\ref{thm:maintwoeigs}, which states that graphs with exactly two negative eigenvalues and smallest degree at least 2 satisfy $s^{-} \geq n-1$. As we will mention later, the case of exactly one positive or one negative eigenvalue satisfies the conjecture, and hence it is natural to consider the 2 negative eigenvalues case. We will also prove two additional lemmas that are also of independent interest: Lemma~\ref{lem:canonicalkgraph} deals with the inequality $s^{-} \geq n-1$ for an additional class of graphs, and in Lemma~\ref{lem:improvedbound} we obtain a stronger version of Lemma~\ref{lem:boundnu}.
Let us start with the latter lemma. For a real symmetric matrix $A$, we denote by $PO(A)$ ($NE(A)$) the sum of the positive (negative) eigenvalues of $A$.

The following result is from \cite{LiWang98}:
\begin{thm}\label{thm:sumofpostwo}
Let $A$ be a real symmetric matrix whose trace is $Tr(A)$. Then $$2PO(A)^2 \geq Tr(A^2)+ (2PO(A)-Tr(A))Tr(A).$$
\end{thm}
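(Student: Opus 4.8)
The plan is to prove the inequality $2\,PO(A)^2 \ge \mathrm{Tr}(A^2) + (2\,PO(A) - \mathrm{Tr}(A))\mathrm{Tr}(A)$ by working directly with the eigenvalues of $A$. Let $\lambda_1, \dots, \lambda_n$ be the eigenvalues, write $P = \{i : \lambda_i > 0\}$ for the index set of positive eigenvalues and $N = \{i : \lambda_i \le 0\}$ for the rest. Then $PO(A) = \sum_{i \in P} \lambda_i$, $\mathrm{Tr}(A) = \sum_{i \in P} \lambda_i + \sum_{i \in N} \lambda_i$, and $\mathrm{Tr}(A^2) = \sum_i \lambda_i^2$. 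First I would expand the right-hand side to see what must be shown, and isolate the terms involving the positive and non-positive parts separately.

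\medskip

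Let me abbreviate $p = PO(A) = \sum_{i\in P}\lambda_i$ and $q = \sum_{i\in N}\lambda_i \le 0$, so that $\mathrm{Tr}(A) = p + q$. The right-hand side becomes $\mathrm{Tr}(A^2) + (2p - (p+q))(p+q) = \mathrm{Tr}(A^2) + (p-q)(p+q) = \mathrm{Tr}(A^2) + p^2 - q^2$. The claim is therefore equivalent to $2p^2 \ge \mathrm{Tr}(A^2) + p^2 - q^2$, i.e. to
\[
p^2 + q^2 \ge \mathrm{Tr}(A^2) = \sum_{i \in P}\lambda_i^2 + \sum_{i\in N}\lambda_i^2.
\]
So the entire statement reduces to the two simultaneous estimates $\left(\sum_{i\in P}\lambda_i\right)^2 \ge \sum_{i\in P}\lambda_i^2$ and $\left(\sum_{i\in N}\lambda_i\right)^2 \ge \sum_{i\in N}\lambda_i^2$, where the latter uses that $q^2 = \left(\sum_{i\in N}\lambda_i\right)^2$.

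\medskip

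Each of these two estimates is an instance of the elementary fact that for real numbers $a_1,\dots,a_k$ that are all of the same sign (or zero), $\left(\sum_j a_j\right)^2 = \sum_j a_j^2 + 2\sum_{j<\ell} a_j a_\ell \ge \sum_j a_j^2$, since the cross terms $a_j a_\ell$ are nonnegative when all $a_j$ share a sign. For the set $P$ all eigenvalues are positive, so the cross terms are positive; for the set $N$ all eigenvalues are $\le 0$, so their pairwise products are again nonnegative. Summing the two inequalities gives $p^2 + q^2 \ge \mathrm{Tr}(A^2)$, which is exactly what the reformulation requires.

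\medskip

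I expect the only genuine subtlety to be bookkeeping around the sign classification: one must make sure the zero eigenvalues are grouped with a same-sign block (either $P$ or $N$) so that no cross term becomes negative, and one must confirm the algebraic reduction of the right-hand side is correct, since the statement packages the bound in the less transparent form involving $(2\,PO(A) - \mathrm{Tr}(A))\mathrm{Tr}(A)$. Once the right-hand side is rewritten as $\mathrm{Tr}(A^2) + PO(A)^2 - (\text{negative part})^2$, the result is immediate from the same-sign Cauchy-type inequality applied twice, so there is no deep obstacle here — the work is entirely in the initial algebraic rearrangement.
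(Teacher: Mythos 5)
Your proof is correct: the algebraic reduction to $PO(A)^2 + NE(A)^2 \ge \mathrm{Tr}(A^2)$ is right, and the same-sign cross-term expansion handles both blocks (zeros grouped with the nonpositive eigenvalues cause no harm, since those cross terms vanish). This is essentially the same argument as the one in the paper's source \cite{LiWang98}: the paper itself states the theorem without proof, but the identity underlying its Theorem~\ref{thm:sumofposthree}, namely $2PO(A)^2 = \mathrm{Tr}(A^2) + 2B_A$ with $B_A$ the nonnegative sum of same-sign cross products, is exactly the expansion you carried out.
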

Using the proof of theorem~\ref{thm:sumofpostwo} as it appears in \cite{LiWang98}, it is possible to derive an exact expression for $2PO(A)^2$. Since we are interested only in adjacency matrices of simple graphs, we will assume that $Tr(A)=0$.
\begin{thm}\label{thm:sumofposthree}
Let $A$ be a real symmetric matrix of order $n$ whose trace is 0, and let $(p,q,n-p-q)$ be the inertia of $A$. Denote by $\{\lambda_i\}_{i=1}^{p+q}$ the nonzero eigenvalues of $A$, such that $\lambda_1 \geq \lambda_2 \geq \ldots \geq \lambda_p>0>\lambda_{p+1} \geq \ldots \geq \lambda_{p+q}$. In addition, let
\begin{center}
$B_A=\sum \limits_{1 \leq i < j \leq p}\lambda_i\lambda_j+\sum \limits_{1 \leq i < j \leq q}\lambda_{p+i}\lambda_{p+j}$.
\end{center}
Then $2NE(A)^2=2PO(A)^2 = Tr(A^2)+ 2B_A$.
\end{thm}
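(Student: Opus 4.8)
The plan is to prove this by a direct algebraic expansion, reducing everything to the elementary identity that the square of a sum equals the sum of squares plus twice the sum of pairwise products. The trace-zero hypothesis does most of the work at the outset, so that no appeal to the slack in Theorem~\ref{thm:sumofpostwo} is actually needed, though the two are equivalent.

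First I would establish the equality $PO(A)^2 = NE(A)^2$. Since $Tr(A) = \sum_{i=1}^{p+q}\lambda_i = 0$ and the zero eigenvalues contribute nothing, we have $PO(A) = \sum_{i=1}^p \lambda_i = -\sum_{i=p+1}^{p+q}\lambda_i = -NE(A)$. Squaring gives $PO(A)^2 = NE(A)^2$, which is the first claimed equality and reduces the task to showing $2PO(A)^2 = Tr(A^2) + 2B_A$.

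Next I would expand each square separately. Writing $PO(A)^2 = \left(\sum_{i=1}^p \lambda_i\right)^2 = \sum_{i=1}^p \lambda_i^2 + 2\sum_{1\le i<j\le p}\lambda_i\lambda_j$ and $NE(A)^2 = \left(\sum_{i=p+1}^{p+q}\lambda_i\right)^2 = \sum_{i=p+1}^{p+q}\lambda_i^2 + 2\sum_{1\le i<j\le q}\lambda_{p+i}\lambda_{p+j}$, I would add these two identities. The sum-of-squares terms combine to $\sum_{i=1}^{p+q}\lambda_i^2 = Tr(A^2)$ (again using that the zero eigenvalues contribute nothing), while the pairwise-product terms combine to exactly $2B_A$ by the definition of $B_A$. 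Since the left-hand side equals $PO(A)^2 + NE(A)^2 = 2PO(A)^2$ by the first step, this yields $2PO(A)^2 = Tr(A^2) + 2B_A$.

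There is no genuine obstacle here: the result is an exact bookkeeping identity rather than an inequality, and the only point requiring care is matching the index ranges in the definition of $B_A$ precisely to the cross terms produced by the two expansions. As a sanity check, note that $B_A \ge 0$, since each product $\lambda_i\lambda_j$ with $1\le i<j\le p$ is a product of two positive eigenvalues and each $\lambda_{p+i}\lambda_{p+j}$ with $1\le i<j\le q$ is a product of two negative eigenvalues; discarding the $2B_A$ term then recovers the inequality $2PO(A)^2 \ge Tr(A^2)$, which is exactly the trace-zero specialization of Theorem~\ref{thm:sumofpostwo}.
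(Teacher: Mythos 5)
Your proof is correct. The paper gives no explicit proof of this theorem---it merely remarks that the identity can be extracted by following Li and Wang's proof of Theorem~\ref{thm:sumofpostwo}---and your direct expansion of $PO(A)^2$ and $NE(A)^2$, combined via the trace-zero identity $PO(A)=-NE(A)$, is exactly the bookkeeping that derivation amounts to, so the two approaches coincide.
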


We can now prove the following lemma:
\begin{lem}\label{lem:improvedbound}
Let $G$ be a graph for which $m \geq \nu(n-1) -B_G$ ($m \geq \pi(n-1) -B_G$), where $\nu$ ($\pi$) is the number of negative (positive) eigenvalues and $B_G:=B_A$. Then $s^{-} \geq n-1$ ($s^{+} \geq n-1$).
\end{lem}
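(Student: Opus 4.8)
The plan is to upgrade the proof of Lemma~\ref{lem:boundnu}: there Brualdi's inequality $E \ge 2\sqrt{m}$ was the only tool feeding into the Cauchy--Schwarz bound, and I would replace that inequality with the exact identity for $E^2$ supplied by Theorem~\ref{thm:sumofposthree}. This is what makes the new hypothesis weaker and the conclusion therefore stronger.

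First I would apply Theorem~\ref{thm:sumofposthree} to the adjacency matrix $A$ of $G$, which has zero trace and satisfies $Tr(A^2) = 2m$. Writing $PO(A)$ for the sum of the positive eigenvalues, the theorem gives $2\,PO(A)^2 = 2m + 2B_G$. Since $Tr(A) = 0$, the sum of the positive eigenvalues equals the absolute value of the sum of the negative eigenvalues, so $PO(A) = E/2$, and the identity rearranges to the exact equality $E^2 = 4(m + B_G)$.

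Next I would substitute this identity into the Cauchy--Schwarz bound of Lemma~\ref{lem:lowerboundsm}, namely $s^- \ge E^2/(4\nu)$. This yields $s^- \ge (m + B_G)/\nu$. The hypothesis $m \ge \nu(n-1) - B_G$ is precisely $m + B_G \ge \nu(n-1)$, so dividing by $\nu$ gives $s^- \ge n-1$. The statement for $s^+$ follows by the identical argument, using $\pi$ in place of $\nu$ together with $s^+ \ge E^2/(4\pi)$, since Theorem~\ref{thm:sumofposthree} also records $2\,PO(A)^2 = 2\,NE(A)^2$ and the same value of $B_G$ governs both sides.

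I do not expect a genuine obstacle here: once the exact identity $E^2 = 4(m+B_G)$ is in hand the rest is a single substitution. The only point needing care is the reduction $PO(A) = E/2$, which relies on the zero-trace assumption of adjacency matrices. It is also worth recording that every summand of $B_G$ is a product of two eigenvalues of the same sign and is therefore nonnegative, so $B_G \ge 0$; consequently the condition $m \ge \nu(n-1) - B_G$ is genuinely weaker than the condition $m \ge \nu(n-1)$ of Lemma~\ref{lem:boundnu}, which is what justifies calling this a strengthening.
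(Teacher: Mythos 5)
Your proof is correct and is essentially the paper's own argument: the paper likewise combines the identity $NE(A)^2 = m + B_G$ from Theorem~\ref{thm:sumofposthree} with Cauchy--Schwarz, merely applying Cauchy--Schwarz directly to $NE(A)$ rather than routing through $E^2 = 4(m+B_G)$ and Lemma~\ref{lem:lowerboundsm} as you do, which is the same computation in different packaging. Your closing observation that $B_G \ge 0$ (so the hypothesis genuinely weakens that of Lemma~\ref{lem:boundnu}) is a nice touch not made explicit in the paper.
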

\begin{proof}
From Theorem~\ref{thm:sumofposthree} we have $2NE(A)^2= Tr(A^2)+ 2B_G=2m+2B_G$, where $A$ is the adjacency matrix of $G$. Therefore by Cauchy-Schwartz and the assumption on $m$, we have
$$\nu s^{-}\geq NE(A)^2=m+B_G \geq \nu(n-1) -B_G+B_G=\nu(n-1),$$
so $s^{-} \geq n-1$. The proof for $s^{+}$ is similar.
\end{proof}

We will now introduce some additional notations. Let $Q(t)$ ($P(t)$) be the set of all connected graphs with $\pi=t$ ($\nu=t$).  It was shown in \cite{Smith} that $G \in Q(1)$ if and only if $G$ is a complete multipartite graph, and as a consequence it is not hard to show that $G \in P(1)$ if and only if it is a complete bipartite graph.
Let us introduce the definition of canonical graphs \cite{TorgaSev852}. For a graph $G=(V,E)$, consider an equivalence relation $\sim$ on $V$ in which $x \sim y$ for $x,y \in V$ if and only if they are not adjacent and they have the same neighbours in $G$. The quotient graph $g$ of $G$ under this relation is called the canonical graph of $G$, and if $G=g$ then $G$ is called canonical graph. Given a canonical graph $G$ of order $n$ with vertex set $V(G)=\{1,2,3,\ldots,n\}$, we denote by $EQ(G)$ the set of graphs whose canonical form is $G$. By definition, the vertex set of any graph $H \in EQ(G)$ can be partitioned into $n$ disjoint sets, such that each set corresponds to a particular vertex of $G$. For a graph $H \in EQ(G)$, we denote the size of the set that corresponds to the vertex $i$ ($1 \leq i \leq n$) by $a_i$. For example, the graph in Figure~\ref{fig:classofG3} belongs to the equivalence class of the graph $G_3$ from Figure~\ref{fig:graphs}, and we have $a_1=2, a_2=1, a_3=1, a_4=3$.

\begin{figure}[h]
\center
\includegraphics[height=1in,width=2.3in]{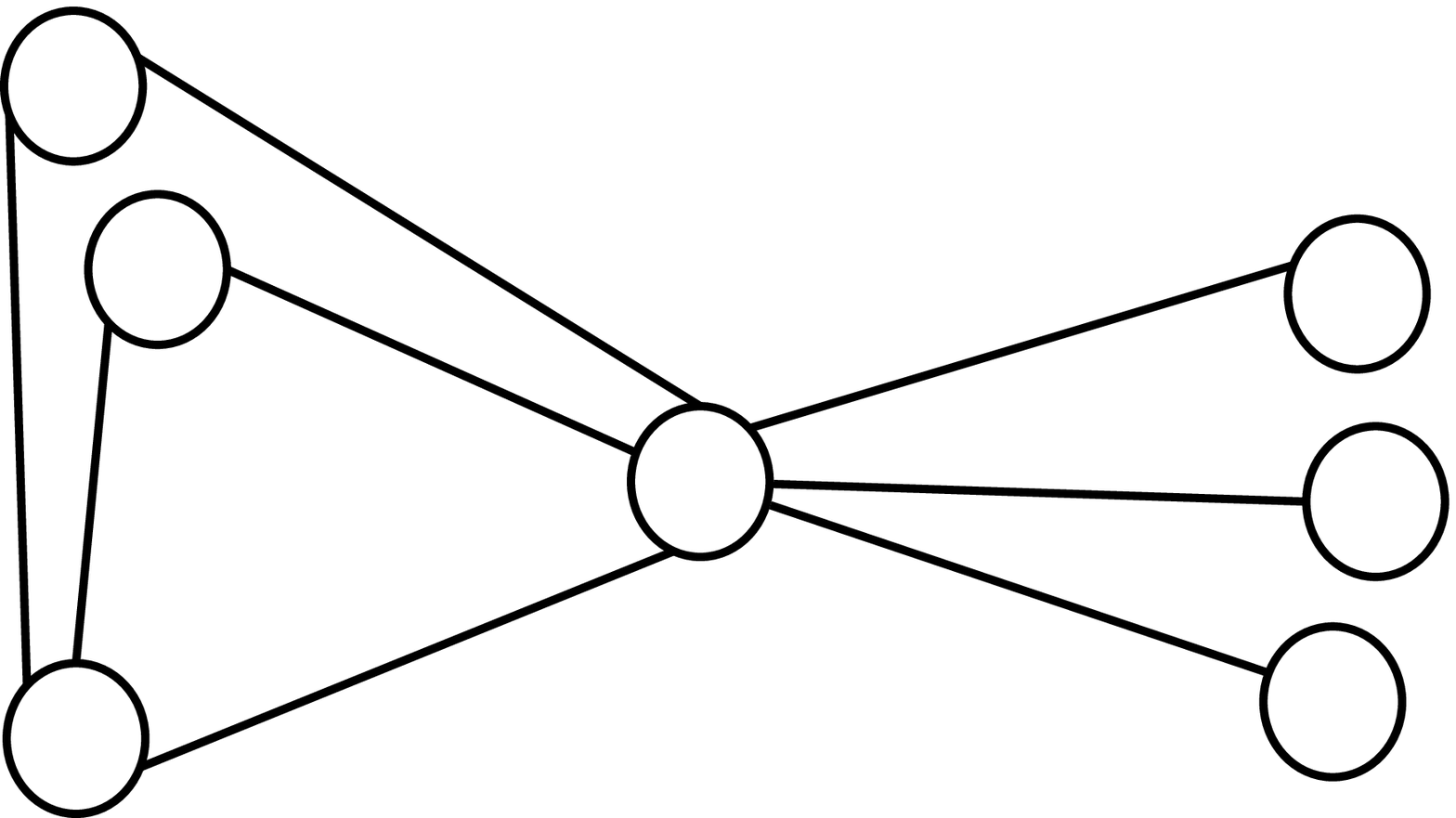}
\caption{example}
\label{fig:classofG3}
\end{figure}

\begin{lem}\cite{TorgaSev852}
For an arbitrary graph $G$ and its canonical graph $g$, the following equalities hold: $\pi(G)=\pi(g) $, $ \nu(G)=\nu(g)$.
\end{lem}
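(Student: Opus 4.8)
The plan is to realize the adjacency matrix of $G$ as a structured ``blow-up'' of the adjacency matrix of $g$ and then track how this operation affects the signs of the eigenvalues. Write $g$ for the canonical graph on $n$ vertices with adjacency matrix $B$, and recall that every graph in $EQ(g)$ is obtained by replacing each vertex $i$ of $g$ by an independent set of $a_i \ge 1$ mutually non-adjacent twins sharing the neighbourhood dictated by $g$. Let $S$ be the $N \times n$ characteristic matrix of this partition, where $N = \sum_i a_i$, so that $S_{ui} = 1$ when vertex $u$ of $G$ lies in the class of vertex $i$ of $g$, and $S_{ui}=0$ otherwise. Because distinct twins in a class are non-adjacent, and $B_{ii}=0$ since $g$ is simple, one checks directly that the adjacency matrix $A$ of $G$ satisfies $A = S B S^{T}$: indeed $(SBS^{T})_{uv} = B_{ij}$ whenever $u$ lies in class $i$ and $v$ in class $j$, which reproduces both the off-diagonal blocks $B_{ij}J$ and the vanishing within-class blocks.

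Next I would pass from the $N \times N$ matrix $A$ to the small $n \times n$ matrix. Since $S$ has full column rank $n$, the matrix $M := S^{T}S = \mathrm{diag}(a_1, \ldots, a_n)$ is positive definite. Using the standard fact that $XY$ and $YX$ share the same nonzero eigenvalues with multiplicity, applied to $X = S$ and $Y = BS^{T}$, the nonzero eigenvalues of $A = S(BS^{T})$ coincide with those of $(BS^{T})S = BM$. Thus every positive, respectively negative, eigenvalue of $A$ corresponds to a positive, respectively negative, eigenvalue of $BM$, and it remains to compute the inertia of $BM$.

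The key step is to read off the signs of the eigenvalues of the non-symmetric matrix $BM$. Because $M$ is positive definite it admits a symmetric positive definite square root $M^{1/2}$, so $BM = M^{-1/2}\left(M^{1/2}BM^{1/2}\right)M^{1/2}$ is similar to the symmetric matrix $M^{1/2}BM^{1/2}$ and hence has exactly the same eigenvalues. On the other hand, $M^{1/2}BM^{1/2} = \left(M^{1/2}\right)^{T} B\, M^{1/2}$ is congruent to $B$, so by Sylvester's law of inertia it has the same numbers of positive and negative eigenvalues as $B$, namely $\pi(g)$ and $\nu(g)$. Combining these, $BM$ has $\pi(g)$ positive and $\nu(g)$ negative eigenvalues, and therefore so does $A$; the remaining $N - \pi(g) - \nu(g)$ eigenvalues of $A$ are zero, the extra zeros arising from the repeated rows of $A$. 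This yields $\pi(G) = \pi(g)$ and $\nu(G) = \nu(g)$.

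I expect the only delicate point to be the interplay between two distinct equivalences: similarity, which preserves eigenvalues and hence their signs but is not directly available since $BM$ is not symmetric, and congruence, which preserves inertia but not eigenvalues. The argument threads between them by inserting the symmetric representative $M^{1/2}BM^{1/2}$, which is simultaneously similar to $BM$ and congruent to $B$. One should also take care that the nonzero-spectrum identity for $XY$ and $YX$ is applied with multiplicities, and confirm that $A = SBS^{T}$ faithfully encodes the twin structure, including the vanishing within-class blocks.
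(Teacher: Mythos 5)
The paper never proves this lemma itself; it is imported verbatim from Torgasev \cite{TorgaSev852}, so your proposal has to be judged as a self-contained substitute, and as such it is correct. Each of your three steps checks out. First, $A = SBS^{T}$ does encode the blow-up faithfully: vertices in the same class are non-adjacent twins, so the within-class blocks vanish exactly as $B_{ii}=0$ forces, and between-class adjacency is all-or-nothing because twins have identical neighbourhoods. Second, applying the nonzero-spectrum identity for $XY$ versus $YX$ with $X=S$, $Y=BS^{T}$ is legitimate, multiplicities included, via $\det(\lambda I_N - XY) = \lambda^{N-n}\det(\lambda I_n - YX)$; this reduces the problem to the $n \times n$ matrix $BM$ with $M = S^{T}S = \mathrm{diag}(a_1,\ldots,a_n)$ positive definite. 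Third, the pivot $M^{1/2}BM^{1/2}$, which is simultaneously similar to $BM$ and congruent to $B$, correctly converts Sylvester's law of inertia into a statement about the signs of the actual eigenvalues of $BM$, giving $\pi(BM)=\pi(g)$ and $\nu(BM)=\nu(g)$, hence $\pi(G)=\pi(g)$ and $\nu(G)=\nu(g)$. What your route buys beyond the citation: it makes the paper self-contained at this point, it applies to an arbitrary blow-up of a graph by independent sets (the canonical-graph quotient being the special case), and it yields strictly more information, namely that the nonzero spectrum of $G$ is exactly the spectrum of the weighted matrix $M^{1/2}BM^{1/2}$. One cosmetic remark: your closing comment about the extra zero eigenvalues coming from repeated rows is not needed for the count; once the nonzero eigenvalues of $A$ are matched with those of $BM$, the remaining $N-\pi(g)-\nu(g)$ eigenvalues are zero by arithmetic alone.
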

\begin{thm}\cite{TorgaSev852}
For any $t \geq 1$, there exists only finitely many canonical graphs in the class $P(t)$.
\label{finitecanonical}
\end{thm}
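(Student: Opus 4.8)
The plan is to bound the number of vertices of an arbitrary canonical graph in $P(t)$; this suffices, since for each fixed vertex count there are only finitely many graphs, so an infinite family would force unboundedly many vertices. So suppose toward a contradiction that $P(t)$ contains canonical graphs on arbitrarily many vertices, and let $g$ be such a graph on $N$ vertices with $\nu(g)=t$. The central tool is Cauchy interlacing, which gives $\nu(H)\le \nu(g)=t$ whenever $H$ is an induced subgraph of $g$. Several small graphs are thereby forbidden as induced subgraphs: $K_{t+2}$ has $t+1$ negative eigenvalues, the induced matching $(t+1)K_2$ has $t+1$ negative eigenvalues, and the bipartite ``half-graph'' and ``co-matching'' patterns of order $t+1$ also have $t+1$ negative eigenvalues. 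The last two follow because a bipartite graph whose biadjacency matrix has rank $r$ has exactly $r$ negative eigenvalues (its nonzero eigenvalues are $\pm\sigma_i$ for the singular values $\sigma_i$), and the relevant biadjacency matrices are an upper-triangular all-ones matrix and $J-I$, both of full rank. In particular $\omega(g)\le t+1$.

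First I would apply Ramsey's theorem: once $N\ge R(t+2,b)$, the bound $\omega(g)\le t+1$ forces an independent set $I\subseteq V(g)$ with $|I|=b$, and $b\to\infty$ as $N\to\infty$. Because $g$ is canonical and $I$ is independent, the vertices of $I$ are pairwise non-adjacent and hence must have pairwise distinct neighbourhoods, all contained in $V(g)\setminus I$. The heart of the argument is then a purely combinatorial extraction: from $b$ vertices with distinct neighbourhoods one wants, for $b$ large, to pull out one of the unavoidable induced bipartite patterns — a large induced matching, a large induced half-graph, or a large induced co-matching — of order $t+1$. Since all three are forbidden by the interlacing step, $b$ must be bounded, contradicting $b\to\infty$ and hence bounding $N$ in terms of $t$.

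I expect the extraction step to be the main obstacle. Distinctness of neighbourhoods yields, for each pair in $I$, a separating vertex of $V(g)\setminus I$, but converting this into a genuinely induced subgraph of $g$ requires more than a bipartite-in-the-cross-edges structure: one must pass to a subfamily whose separating vertices realise a controlled set-system pattern (chain, antichain, or matching) and, crucially, whose ``outside'' endpoints are pairwise non-adjacent, so that the extracted configuration is induced. This calls for an iterated Ramsey argument — a first reduction fixing the cross-adjacency type among the separating vertices, and a second Ramsey step (again using $\omega(g)\le t+1$) rendering the outside endpoints independent — after which the three-way trichotomy of patterns becomes exhaustive. Organising these reductions so the trichotomy is complete, and tracking the resulting explicit dependence of the vertex bound on $t$, is the delicate part; by contrast the spectral inputs, namely interlacing and the inertia computations for the three patterns, are routine.
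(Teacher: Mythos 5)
The paper offers no proof of this statement to compare against: it is imported verbatim from Torgasev's 1985 paper \cite{TorgaSev852}, so your attempt has to be judged as a self-contained argument. Judged that way, your overall strategy is sound and most of the ingredients check out. Interlacing does give $\nu(H)\le\nu(g)$ for induced subgraphs $H$; the inertia computations for $K_{t+2}$, the induced matching $(t+1)K_2$, the half-graph and the co-matching are all correct (the bipartite ones because the spectrum is $\pm\sigma_i$ with $\sigma_i$ the singular values of the biadjacency matrix, which has full rank in both cases); $\omega(g)\le t+1$ plus Ramsey does yield a large independent set $I$; canonicity does force pairwise distinct neighbourhoods on $I$, all contained in $V(g)\setminus I$; and your final Ramsey step on the outside endpoints is legitimate, since matchings, co-matchings and half-graphs are closed under passing to a common index subset on both sides.

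The genuine gap is the step you yourself flag as the ``main obstacle'': the claim that a sufficiently large family of pairwise distinct neighbourhoods forces an induced matching, induced half-graph, or induced co-matching of order $t+1$ in the cross-edges. This trichotomy lemma is not a routine Ramsey application that can be waved at; it is the entire combinatorial content of the theorem. (It is in fact a true and known lemma --- provable by building a distinguishing tree for the neighbourhood family, extracting a long root-to-leaf path to get sequences $a_1,\dots,a_m$ and $b_1,\dots,b_m$ with controlled agreement, and then applying Ramsey to the adjacency types of pairs $(a_i,b_j)$, $i<j$, to force one of the three canonical patterns --- but none of this is carried out in your proposal.) As written, your argument reduces the theorem to an unproved statement that is at least as deep as what was to be shown, so it does not constitute a proof; supplying a complete proof of the extraction lemma, with an explicit bound on the family size needed to force patterns of order $t+1$, is what would close it.
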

Using the characterisation of $P(1)$ and $Q(1)$ given above and Theorem~\ref{thm:completeqpartite} we get that if $G$ belongs to one of these classes, $min(s^{-},s^{+}) \geq n-1$. Hence, it is natural to consider the case $t=2$. The set of canonical graphs in $P(2)$ was fully characterised by Torgasev \cite{TorgaSev85}.

\begin{thm}\label{thm:twonegeigs}
A graph $G$ has exactly 2 negative eigenvalues if and only if its canonical graph is one of the graphs $G_i$, $i=1,\ldots, 9$ described below: $G_1=K_3$, $G_2=P_4$, $G_4=P_5$, $G_5=C_5$, and the remaining graphs are presented in Figure~\ref{fig:graphs}.
\end{thm}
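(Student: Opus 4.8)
The plan is to turn the classification into a finite problem about canonical graphs and then resolve that finite problem by enumeration. By the lemma of \cite{TorgaSev852} that $\pi(G)=\pi(g)$ and $\nu(G)=\nu(g)$ for every graph $G$ and its canonical graph $g$, the property ``$G$ has exactly two negative eigenvalues'' depends only on $g$. It therefore suffices to determine every (connected) canonical graph $g$ with $\nu(g)=2$, i.e.\ every canonical member of $P(2)$; Theorem~\ref{finitecanonical} guarantees that there are only finitely many such graphs. Once this finite list has been shown to be exactly $\{G_1,\dots,G_9\}$, the theorem follows in both directions: any graph with $\nu=2$ has one of the $G_i$ as its canonical form, and any graph whose canonical form is some $G_i$ has $\nu=2$ by the same inertia-preservation lemma.

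The engine for restricting the canonical graphs is Cauchy interlacing. If $H$ is an induced subgraph of $G$ on $k$ of its $n$ vertices, then $\mu_{i+n-k}(G)\le\mu_i(H)\le\mu_i(G)$, and taking $i$ to be the first index at which $\mu_i(H)<0$ shows that $\nu(H)\le\nu(G)$. Hence a graph with $\nu=2$ can contain no induced subgraph with three negative eigenvalues. This gives several hard constraints, each obtained by exhibiting a small graph with $\nu=3$: from $\nu(K_4)=3$ the graph is $K_4$-free, so its clique number is at most three; from $\nu(3K_2)=3$ it contains no induced matching of size three; and from $\nu(P_6)=\nu(C_6)=3$ it contains neither an induced $P_6$ nor an induced $C_6$, so (a shortest path being induced) its diameter is at most four. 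To these I add the two properties built into the problem: the graph is connected, and, being canonical, it has no two non-adjacent vertices with identical neighbourhoods.

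From these constraints I would show that a canonical graph with $\nu=2$ has at most five vertices. The exclusion of long induced paths caps the diameter, the exclusion of $K_4$ and of large induced matchings limits how neighbourhoods can be arranged, and the twin-freeness of a canonical graph rules out exactly the repeated vertices (false twins, contributing to the $0$ and $-1$ eigenspaces) that would let such a graph grow without disturbing $\nu$. A case analysis organised by clique number --- the triangle-free case, handled through the bipartite and $C_5$ structure, versus the case $\omega=3$ --- then forces $n\le5$. With the order bounded, I would enumerate all connected, twin-free graphs on at most five vertices that satisfy the forbidden-subgraph constraints, compute each spectrum, and keep those with exactly two negative eigenvalues; this yields $K_3$, $P_4$, $P_5$, $C_5$ together with the five graphs of Figure~\ref{fig:graphs}, namely $G_1,\dots,G_9$.

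The hard part is twofold. The first difficulty is proving the order bound rigorously rather than heuristically: one must be sure that the forbidden induced subgraphs above genuinely preclude any canonical graph with $\nu=2$ on six or more vertices, which needs a complete and carefully organised case analysis rather than the suggestive estimates sketched here. The second difficulty is the completeness of the final enumeration, ensuring that no admissible canonical graph is missed; this is precisely the sort of exhaustive verification that the computer searches already described in the paper can underwrite. I expect the order bound to be the principal obstacle, since once the candidate set is finite the remaining spectral checks are entirely routine.
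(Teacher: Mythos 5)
The paper itself offers no proof of this theorem: it is quoted from Torgasev \cite{TorgaSev85}, so there is no internal argument to compare against, and your proposal must be judged on its own terms. Judged so, it contains a concrete factual error that is fatal as written. You claim that every canonical graph with $\nu=2$ has at most five vertices, and that enumerating connected, twin-free graphs on at most five vertices recovers all nine graphs $G_1,\dots,G_9$. That is false: two of the nine canonical graphs have \emph{six} vertices. You can read this off from the paper itself --- in Lemma~\ref{lem:maintwoeigssecond} the edge count of a graph in $EQ(G_8)$ is written as $m=a_1a_2+a_2a_3+a_3a_4+a_4a_1+a_2a_5+a_3a_5+a_5a_6$, and in Lemma~\ref{lem:maintwoeigsthird} the graph $G_9$ likewise has vertices $1,\dots,6$. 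Since $G_8$ and $G_9$ are themselves canonical, connected, have $\nu=2$, and therefore satisfy every one of your forbidden-subgraph constraints (no induced $K_4$, $3K_2$, $P_6$ or $C_6$, no false twins), no correct case analysis based on those constraints can ever yield the bound $n\le 5$; the true bound is $n\le 6$, and reaching it requires strictly more structural work than the constraints you list. Your final sentence, asserting that the $\le 5$-vertex search ``yields'' all of $G_1,\dots,G_9$, is thus internally inconsistent.

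Beyond that error, the proposal is a plan rather than a proof: the two steps you yourself flag as the hard part --- a rigorous order bound and a complete enumeration --- are precisely the content of Torgasev's theorem, and neither is carried out. What you do establish is sound and is the right kind of machinery: the inertia-preservation lemma for canonical graphs plus Theorem~\ref{finitecanonical} correctly reduce the classification to a finite problem, and the interlacing argument ($\nu(H)\le\nu(G)$ for induced subgraphs $H$, hence the forbidden configurations $K_4$, $3K_2$, $P_6$, $C_6$) is valid. (One small slip: non-adjacent vertices with equal neighbourhoods contribute only to the $0$-eigenspace, not to the $-1$-eigenspace; it is \emph{adjacent} twins that produce $-1$'s.) But as it stands the argument shows only that the classification problem is finite in principle, not what the finite answer is, and with the erroneous five-vertex cap it would in fact ``prove'' a statement contradicted by the theorem it is meant to establish.
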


\begin{figure}
\centering
\begin{subfigure}{.5\textwidth}
  \centering
  \includegraphics[height=.6in]{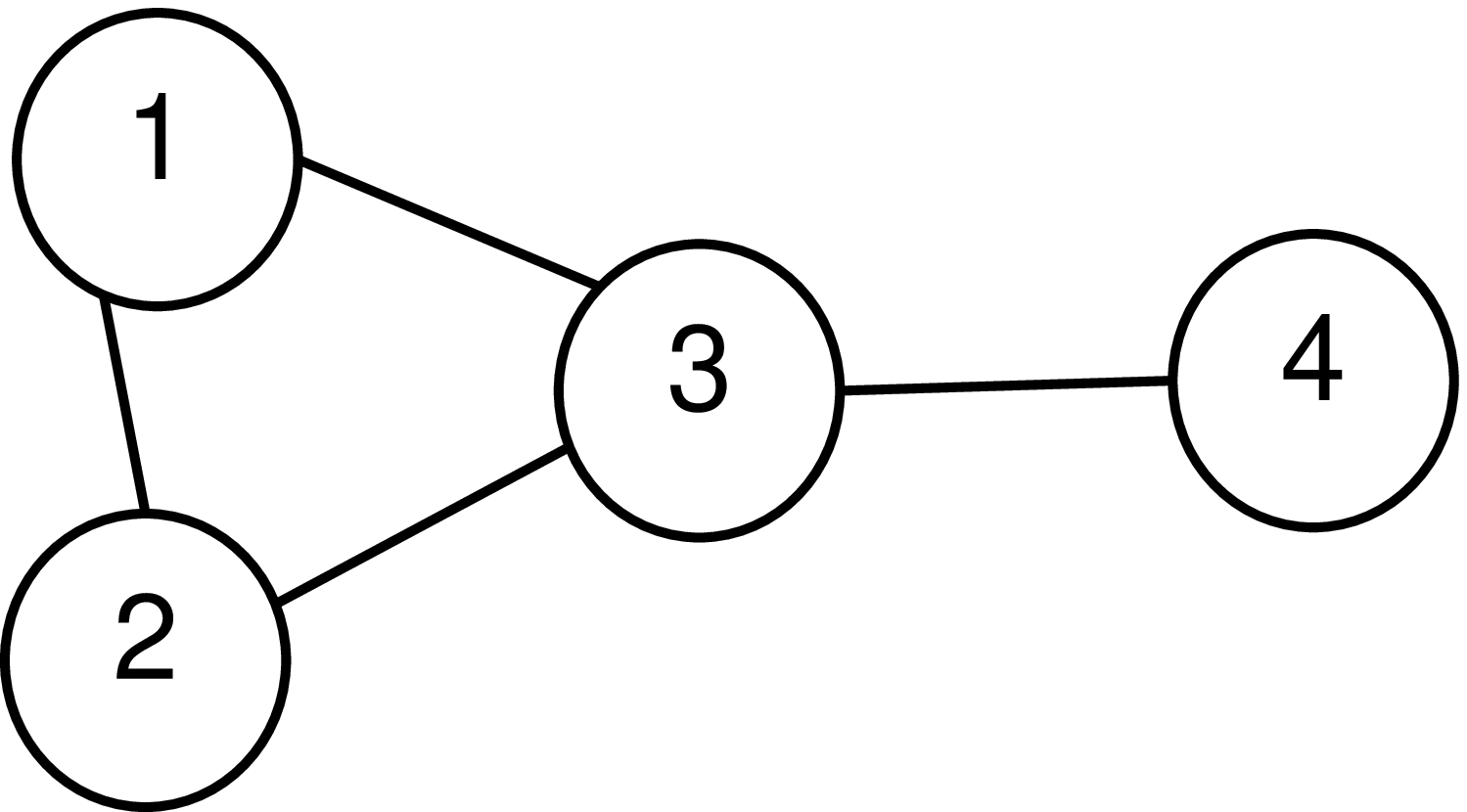}
  \caption{$G_3$}
\end{subfigure}%
\begin{subfigure}{.5\textwidth}
  \centering
  \includegraphics[height=.7in]{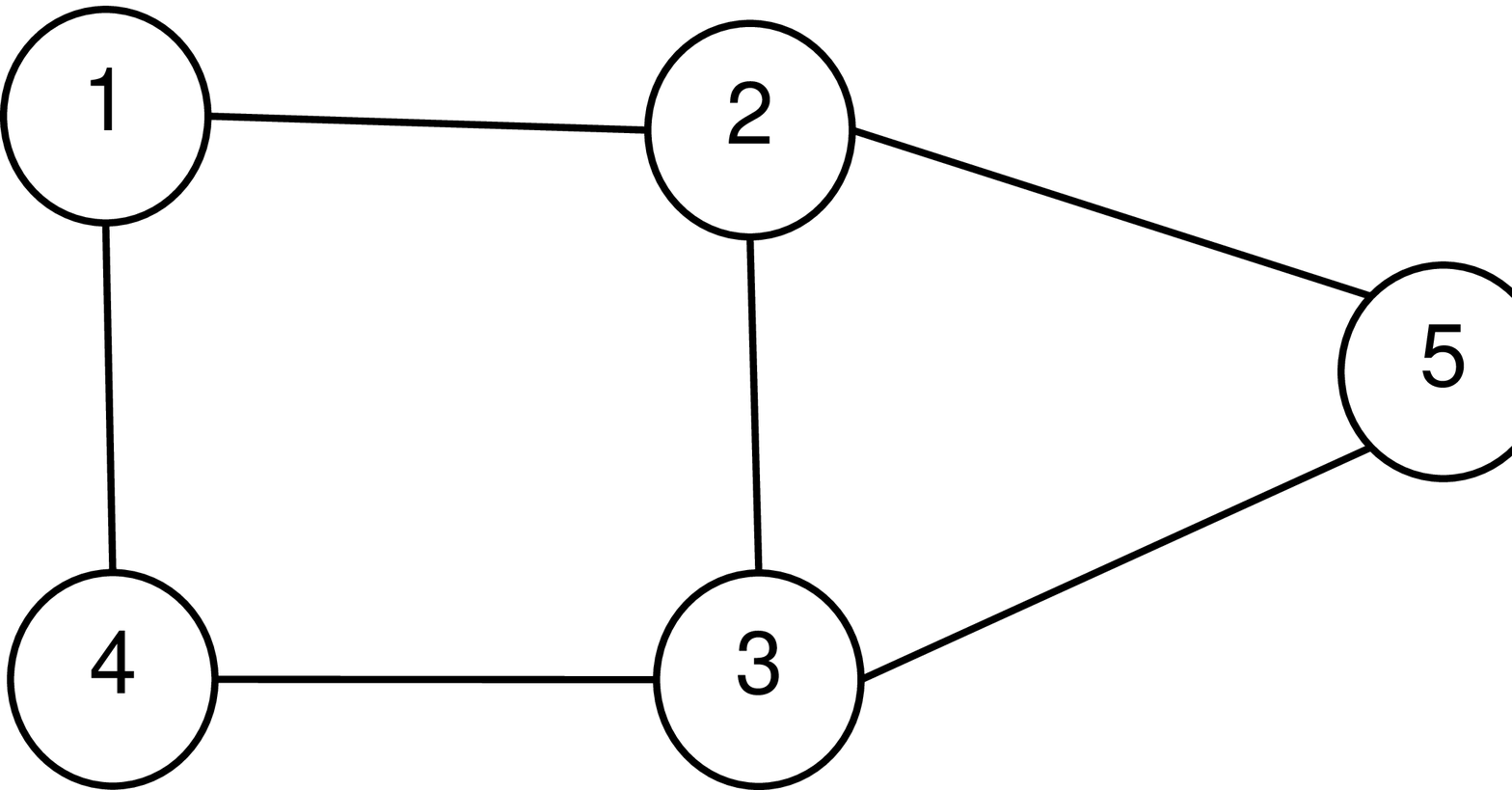}
  \caption{$G_6$}
\end{subfigure}
\begin{subfigure}{.5\textwidth}
  \centering
  \includegraphics[height=.7in]{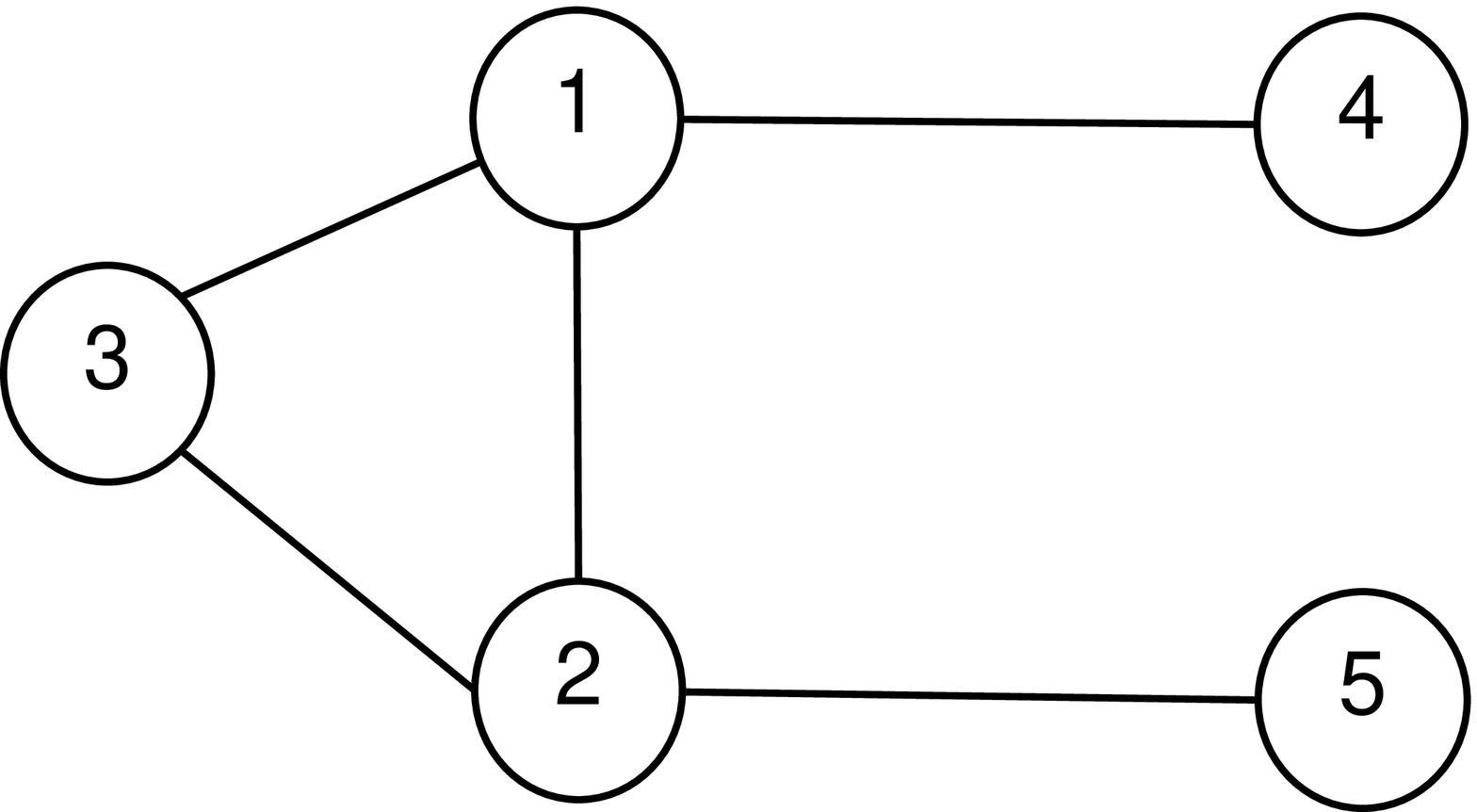}
  \caption{$G_7$}
\end{subfigure}
\begin{subfigure}{.5\textwidth}
  \centering
  \includegraphics[height=.8in]{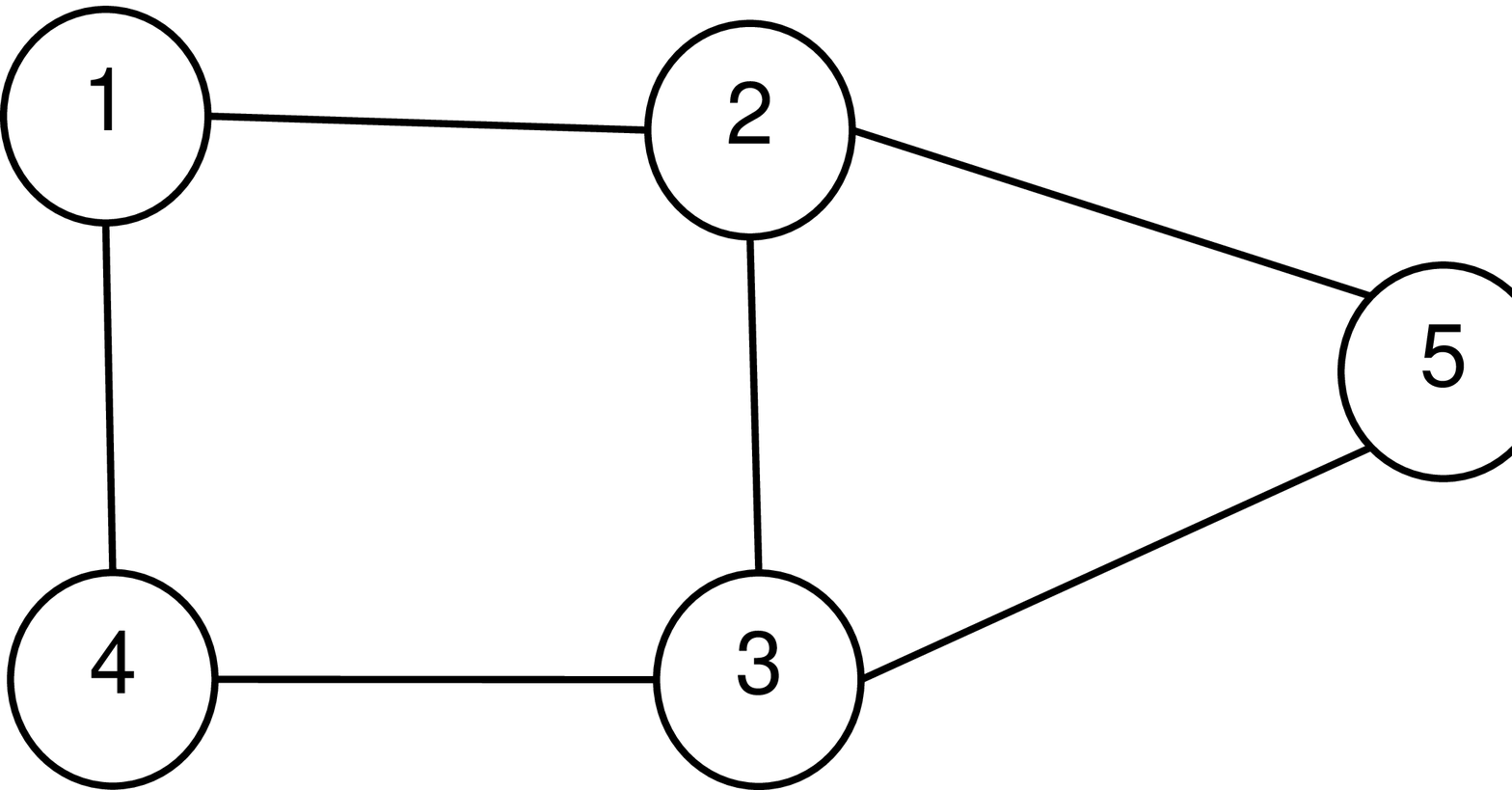}
  \caption{$G_8$}
\end{subfigure}
\begin{subfigure}{.5\textwidth}
  \centering
  \includegraphics[height=1in,width=2.7in]{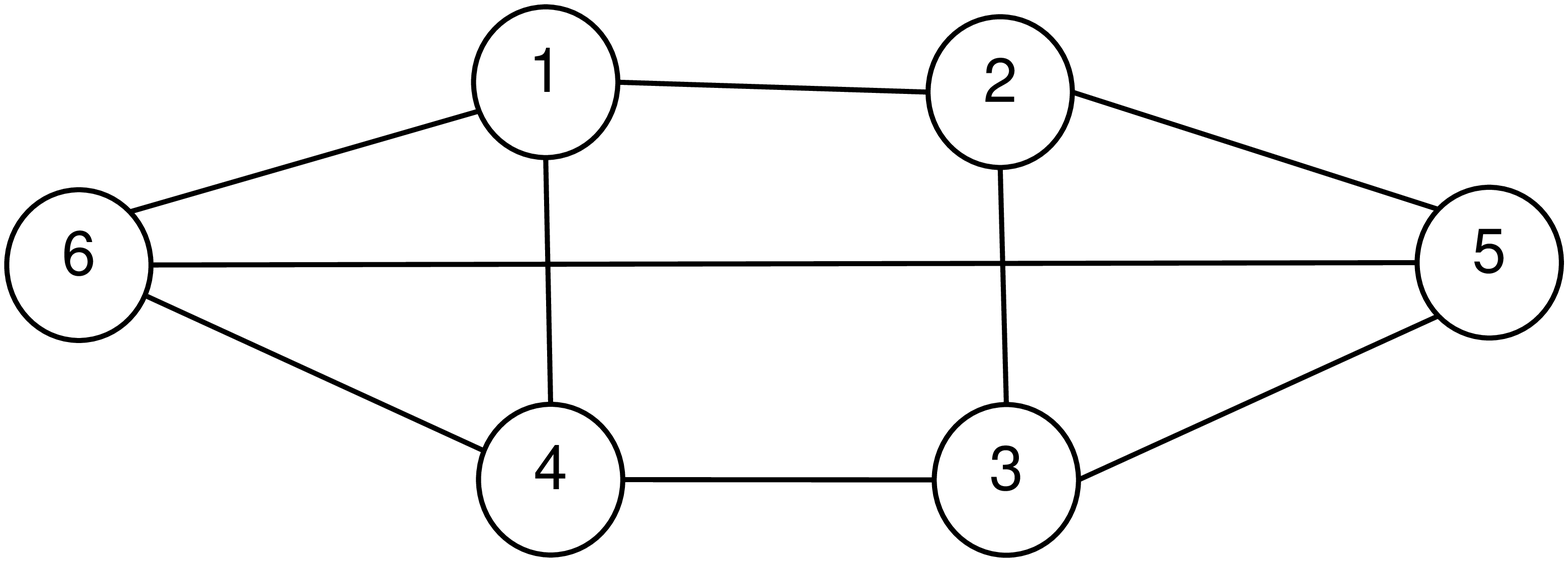}
  \caption{$G_9$}
\end{subfigure}
\caption{}
\label{fig:graphs}
\end{figure}

In this section we concentrate on this class, and our main result in this section is the following:

\begin{thm}\label{thm:maintwoeigs}
Let $G \in P(2)$ such that $\delta(G) \geq 2$. Then $s^{-} \geq n-1$
\end{thm}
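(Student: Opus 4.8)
The plan is to combine the structural classification in Theorem~\ref{thm:twonegeigs} with the reductions already available for special classes. The key preliminary observation is that every $G \in P(2)$ is a blow-up of one of the nine canonical graphs $G_1,\ldots,G_9$: writing $D=\mathrm{diag}(a_1,\ldots,a_t)$ for the vector of part sizes, $A_g$ for the adjacency matrix of the canonical graph $g$, and $S$ for the $n\times t$ part-membership matrix (so $A_G = S A_g S^{T}$), the nonzero eigenvalues of $G$ coincide with the eigenvalues of the $t\times t$ matrix $M = D^{1/2} A_g D^{1/2}$. This is the standard equitable-partition computation, and via Sylvester's law of inertia it reproves $\nu(G)=\nu(g)=2$. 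Consequently $s^{-}(G)=\beta_1^2+\beta_2^2$, where $\beta_1\geq\beta_2$ are the two negative eigenvalues of $M$, so the whole problem reduces to a finite collection of low-dimensional computations parametrized by the sizes $a_i$.

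First I would dispose of the easy canonical graphs. If $g$ is bipartite then so is every blow-up $G$, and Theorem~\ref{thm:bipart} gives $s^{-}(G)=m\geq n-1$ at once; this covers $G_2=P_4$, $G_4=P_5$, and any other bipartite $G_i$. If $g=K_3$ then $G$ is complete tripartite, so $\pi(G)=1$ and Theorem~\ref{thm:completeqpartite} applies. What remains are the non-bipartite canonical graphs other than $K_3$, in particular $G_5=C_5$ together with the non-bipartite members of $\{G_3,G_6,G_7,G_8,G_9\}$, and for all of these $\pi(G)\geq 2$, so the sum $P_2$ of products of pairs of positive eigenvalues is strictly positive.

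For each remaining graph I would translate the hypothesis $\delta(G)\geq 2$ into the arithmetic condition $\min_i\sum_{j\sim i}a_j\geq 2$, and verify $s^{-}(G)\geq n-1$ directly. A useful guiding identity, which follows from Theorem~\ref{thm:sumofposthree}, is $s^{-}=m+P_2-N_2$ with $N_2=\beta_1\beta_2>0$; since $\delta\geq 2$ forces $2m=\sum\deg\geq 2n$ and hence $m\geq n$, the desired bound $s^{-}\geq n-1$ is equivalent to $N_2\leq P_2+(m-n+1)$, i.e. the product of the two negative eigenvalues is controlled by $P_2$ plus the cyclomatic number. When the part sizes are spread out, the energy estimate of Lemma~\ref{lem:boundnu} (equivalently $s^{-}\geq E^2/8$ for $\nu=2$) already yields $s^{-}\geq n-1$, so the energy bound handles all but certain thin sub-families. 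The genuinely delicate cases are the unbounded low-energy families, for example the $C_5$ blow-up with one large part and four singleton parts, where Brualdi's bound $E\geq 2\sqrt{m}$ is too weak and one must instead write down the characteristic polynomial of $M$ explicitly and estimate $\beta_1^2+\beta_2^2$ as a function of the $a_i$.

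I expect the main obstacle to be exactly this last step: pushing the explicit eigenvalue estimates through for the handful of non-bipartite canonical graphs, uniformly over all admissible part sizes, and confirming that $\delta\geq 2$ excludes precisely the blow-ups on which the inequality would otherwise fail, namely those containing a part adjacent only to a single singleton part. Organizing these computations -- reducing each infinite family to a bounded quadratic-type inequality in the $a_i$ through the quotient matrix $M$, and checking the finitely many residual small configurations by hand -- is where the real labor lies.
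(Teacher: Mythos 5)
Your outline reproduces the paper's high-level strategy faithfully: invoke the Torgasev classification (Theorem~\ref{thm:twonegeigs}), view each $G \in P(2)$ as a blow-up of one of $G_1,\ldots,G_9$ with part sizes $a_i$, dispose of $G_1=K_3$ via Theorem~\ref{thm:completeqpartite} and of the bipartite canonical graphs $G_2, G_4$ via Theorem~\ref{thm:bipart}, and observe that when the parts are large the plain bound of Lemma~\ref{lem:boundnu} (via $m \geq 2(n-1)$) finishes the job. You even derive, from Theorem~\ref{thm:sumofposthree}, the identity $s^{-}=m+P_2-N_2$, which is exactly the algebraic fact the paper exploits. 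But there is a genuine gap: the cases you yourself identify as the crux — blow-ups of the non-bipartite canonical graphs having singleton parts, e.g.\ the $C_5$ blow-up with one large part and four singletons, where $m = 2N+3 < 2(n-1) = 2N+6$ so Lemma~\ref{lem:boundnu} fails — are never actually proved. You defer them to ``write down the characteristic polynomial of $M$ explicitly and estimate $\beta_1^2+\beta_2^2$,'' and that deferred computation is precisely the content of the theorem; a plan that stops there is not a proof.

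The paper closes this gap without any characteristic-polynomial analysis, by packaging your identity into a usable sufficient condition: Lemma~\ref{lem:improvedbound} states that $m \geq \nu(n-1) - B_G$ already forces $s^{-} \geq n-1$ (Cauchy--Schwarz applied to $NE(A)^2 = m + B_G$), and $B_G$ is bounded below by the corresponding quantity for the canonical graph via interlacing (e.g.\ $B_G \geq 5$ for all blow-ups of $C_5$, $B_G \geq 7$ for blow-ups of $G_9$). With these constants in hand, each residual family collapses to an inequality of the form $\sum (a_ua_v - a_u - a_v + 1) \geq -c$, which is trivial since $a_ua_v \geq a_u + a_v - 1$ for positive integers. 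The hypothesis $\delta(G)\geq 2$ enters only for $G_3, G_7, G_8$: a pendant vertex $u$ of the canonical graph with neighbour $v$ gives vertices of $G$ of degree exactly $a_v$, so $\delta \geq 2$ forces $a_v \geq 2$, which is what Lemma~\ref{lem:maintwoeigsfourth} needs. Your instinct that $\delta \geq 2$ ``excludes precisely'' the failing configurations is in the right direction, but you neither verify it nor carry out the estimates for the families where it is needed, so the decisive portion of the argument is missing.
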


The proof will be based on several lemmas, and we will actually show that $s^{-} \geq n-1$ also holds for many graphs in $P(2)$ with smallest degree equals 1, and also for many graphs in $P(t)$ for $t>0$. We start with a lemma that deals with certain type of graphs in $P(t)$- graphs whose canonical form has smallest degree at least $t$. This set is nonempty, as the complete graph on $t+1$ vertices is a canonical graph with smallest degree at least $t$. If $t=2$, then among the 9 canonical graphs in $P(2)$, 4 satisfy this property. We will show that for many of those graphs, $s^{-} \geq n-1$.

\begin{lem}\label{lem:canonicalkgraph}
Let $H \in P(t)$ be a canonical graph of order $n$ such that $\delta(H) \geq t$. Denote the vertex set of $H$ by $V(H)=\{1,2,\ldots,n\}$, and let $G \in EQ(H)$ such that $a_v \geq 2$ for all $v \in V(H)$. Then $G$ satisfies $s^{-} \geq n-1$.
\end{lem}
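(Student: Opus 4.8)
The plan is to exploit the blow-up structure of $G$ and reduce everything to the edge-counting criterion of Lemma~\ref{lem:boundnu}. Since $G \in EQ(H)$, the graph $G$ is obtained from $H$ by replacing each vertex $v \in V(H)$ with an independent set $S_v$ of size $a_v$ and joining $S_i$ to $S_j$ completely precisely when $ij \in E(H)$. I will write $N = \sum_{v \in V(H)} a_v = |V(G)|$, and the conclusion I aim for is the conjecture for $G$ itself, namely $s^-(G) \ge N - 1$ (this gives the stated bound a fortiori, since $N \ge 2n > n$ because each $a_v \ge 2$). The first ingredient is that $G$ has exactly $t$ negative eigenvalues: by the canonical-graph lemma of \cite{TorgaSev852}, $\nu(G) = \nu(H) = t$. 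Equivalently, the nonzero spectrum of $A_G$ coincides with that of $D^{1/2} A_H D^{1/2}$ with $D = \mathrm{diag}(a_1,\dots,a_n)$, a matrix congruent to $A_H$, so Sylvester's law of inertia already forces $\nu(G)=t$.

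Next I would count the edges of $G$. Each $ij \in E(H)$ contributes a complete bipartite graph between $S_i$ and $S_j$, so
\[
m_G = \sum_{ij \in E(H)} a_i a_j.
\]
The elementary inequality driving the proof is that $a_i a_j \ge a_i + a_j$ whenever $a_i, a_j \ge 2$, which holds because $(a_i-1)(a_j-1) \ge 1$. Combining this with $\delta(H) \ge t$ yields
\[
m_G = \sum_{ij \in E(H)} a_i a_j \ge \sum_{ij \in E(H)} (a_i + a_j) = \sum_{v \in V(H)} d_H(v)\, a_v \ge t \sum_{v \in V(H)} a_v = tN,
\]
where $d_H(v) \ge \delta(H) \ge t$ denotes the degree of $v$ in $H$.

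Finally, since $\nu(G) = t$ and $m_G \ge tN > t(N-1)$, I would apply Lemma~\ref{lem:boundnu} to $G$ (its ``$n$'' being $N$ and its ``$\nu$'' being $t$) to conclude $s^-(G) \ge N - 1$. I do not expect a serious obstacle: the only two points that require care are the identity $\nu(G)=t$, supplied by the cited canonical-graph lemma, and the edge count, where the hypotheses $a_v \ge 2$ and $\delta(H) \ge t$ are used exactly once each to make the two displayed inequalities go through. These hypotheses are thus not incidental but precisely what powers the bound; were some $a_v = 1$ allowed, the step $a_i a_j \ge a_i + a_j$ would fail and one would be forced to fall back on the sharper estimate of Lemma~\ref{lem:improvedbound}.
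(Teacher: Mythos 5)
Your proof is correct and follows essentially the same route as the paper's: the blow-up edge count $m = \sum_{ij \in E(H)} a_i a_j$, the inequality $a_i a_j \ge a_i + a_j$ for $a_i, a_j \ge 2$, the degree bound $\delta(H) \ge t$ giving $m \ge tN > t(N-1)$, and then Lemma~\ref{lem:boundnu}. Your only additions are making explicit the fact $\nu(G) = \nu(H) = t$ (which the paper leaves implicit, relying on the cited canonical-graph lemma) and clarifying that the bound holds with $|V(G)|$ in place of the order of $H$, exactly as the paper's proof also does by redefining $n$.
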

\begin{proof}
The graph $G$ has $n=\sum\limits_{v \in V(H)}a_v$ vertices and $m=\sum\limits_{\{u,v\}\in E(H)}a_ua_v$ edges. Since $a_v \geq 2$ for all $v \in V(H)$ and $\delta(H)\geq t$, we have
\begin{center}
$m=\sum\limits_{\{u,v\}\in E(H)}a_ua_v \geq \sum\limits_{\{u,v\}\in E(H)}(a_u+a_v) \geq \sum\limits_{v \in V(H)}ta_v=tn>t(n-1)$.
\end{center}
Thus $m \geq t(n-1)$, and using Lemma~\ref{lem:boundnu} we are done.
\end{proof}
\begin{lem}\label{lem:maintwoeigsfirst}
Let $G \in EQ(G_i)$ for some $i \in \{1,2,4\}$. Then $s^{-} \geq n-1$.
\end{lem}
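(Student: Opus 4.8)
The plan is to exploit the fact that a member of $EQ(G_i)$ is simply a blow-up of the canonical graph $G_i$: each vertex $v$ of $G_i$ is replaced by an independent set of $a_v \geq 1$ mutually non-adjacent duplicate vertices, and two such classes are completely joined precisely when the corresponding vertices of $G_i$ are adjacent. The key observation is that the three indices $i \in \{1,2,4\}$ are exactly the canonical graphs whose blow-ups land in a class for which the conjecture has already been established: $G_1 = K_3$ produces complete multipartite graphs, while $G_2 = P_4$ and $G_4 = P_5$ produce bipartite graphs. So, rather than computing eigenvalues, I would reduce each case to an earlier theorem.

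For $i = 1$, I would first observe that since every pair of vertices of $K_3$ is adjacent, blowing up $K_3$ completely joins the three independent classes, so $G = K_{a_1,a_2,a_3}$ is a complete $3$-partite graph. Theorem~\ref{thm:completeqpartite} then gives $s^- \geq n-1$ immediately.

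For $i \in \{2,4\}$ the canonical graph $G_i$ is a path, hence bipartite, and I would argue that a blow-up of a bipartite graph is again bipartite. Concretely, colour each vertex of $G$ by the colour that its parent vertex in $G_i$ receives in a fixed $2$-colouring of $G_i$; this is a proper $2$-colouring of $G$, because each blow-up class is an independent set and an edge of $G$ can only join two classes whose parent vertices are adjacent, and hence oppositely coloured, in $G_i$. Since $G_i$ is connected and every class is non-empty, $G$ is connected, so $m \geq n-1$, and Theorem~\ref{thm:bipart} yields $s^- = s^+ = m \geq n-1$.

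I expect no real computational obstacle; the argument is entirely structural. The one point that must not be overlooked is that the three cases do not reduce to a single prior result: $K_3$ is an odd cycle and therefore not bipartite, so the case $i=1$ genuinely requires the complete-multipartite Theorem~\ref{thm:completeqpartite} rather than the bipartite Theorem~\ref{thm:bipart}. This is also why these particular indices are singled out among the nine canonical graphs of $P(2)$: the remaining ones, such as $G_5 = C_5$, are neither bipartite nor complete multipartite and will have to be handled by different methods.
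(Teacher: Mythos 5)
Your proposal is correct and follows exactly the paper's argument: blow-ups of $G_1=K_3$ are complete $3$-partite graphs handled by Theorem~\ref{thm:completeqpartite}, while blow-ups of $G_2=P_4$ and $G_4=P_5$ are bipartite graphs handled by Theorem~\ref{thm:bipart}. You merely spell out details the paper leaves implicit (the $2$-colouring of the blow-up and the edge count $m \geq n-1$ needed to invoke the bipartite theorem), which is a fair elaboration rather than a different route.
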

\begin{proof}
Any graph in $EQ(G_1)$ is a complete 3-partite graph. Similarly, graphs that belong to $EQ(G_2)$ or $EQ(G_4)$ are bipartite. We already proved the claim for such classes of graphs in Theorems~\ref{thm:bipart} and~\ref{thm:completeqpartite}, so we are done.
\end{proof}
\begin{lem}\label{lem:maintwoeigssecond}
Let $G \in EQ(G_i)$ for some $i \in \{3,5,6,7,8,9\}$. Assume that for any vertex $v\in V(G_i)$, $a_v \geq 2$. Then $s^{-} \geq n-1$.
\end{lem}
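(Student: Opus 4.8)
The plan is to reduce everything to the edge count $m \ge 2(n-1)$ and then invoke Lemma~\ref{lem:boundnu}. First I would record that, by the cited equality $\nu(G)=\nu(g)$ between a graph and its canonical graph, every $G \in EQ(G_i)$ with $G_i \in P(2)$ has exactly $\nu = 2$ negative eigenvalues. Hence it suffices to prove $m \ge 2(n-1)$, since Lemma~\ref{lem:boundnu} with $\nu=2$ then yields $s^- \ge n-1$. Writing $n = \sum_{v \in V(G_i)} a_v$ and $m = \sum_{\{u,v\}\in E(G_i)} a_u a_v$, the whole problem becomes a statement about the quadratic $m$ as a function of the integer weights $a_v \ge 2$.

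The key step is a monotonicity observation that lets me minimise over the weights. Fix a vertex $v$ and increase $a_v$ by one while keeping the other weights fixed: then $m$ increases by $\sum_{w \sim v} a_w$ while $2(n-1)$ increases by $2$. Because $G_i$ is connected, $v$ has a neighbour $w$, and $a_w \ge 2$, so $\sum_{w \sim v} a_w \ge 2$; thus $m - 2(n-1)$ does not decrease. Applying this coordinate by coordinate, $m - 2(n-1)$ attains its minimum over $\{a_v \ge 2\}$ at the corner $a_v = 2$ for all $v$. There $n = 2n'$ and $m = 4m'$, where $n'$ and $m'$ are the numbers of vertices and edges of $G_i$, so $m - 2(n-1) = 4m' - 4n' + 2$.

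It then remains to check that $4m' - 4n' + 2 \ge 0$, i.e. $m' \ge n'$, for each $i \in \{3,5,6,7,8,9\}$; equivalently, that each of these canonical graphs contains a cycle. This is where the hypothesis meets Torgasev's list (Theorem~\ref{thm:twonegeigs}): the only trees among the nine canonical graphs are $P_4 = G_2$ and $P_5 = G_4$, both already dispatched in Lemma~\ref{lem:maintwoeigsfirst}. Indeed a canonical tree cannot carry two leaves at a common vertex (they would be duplicated), and the matching-number-two constraint forcing $\nu = 2$ leaves only $P_4$ and $P_5$. Consequently each $G_i$ with $i \in \{3,5,6,7,8,9\}$ has $m' \ge n'$, giving $m \ge 2(n-1)+2 > 2(n-1)$, and Lemma~\ref{lem:boundnu} finishes the proof.

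I expect the main obstacle to be precisely the graphs $G_i$ carrying a pendant vertex (those with $\delta(G_i)=1$), for which the clean bound $m \ge tn$ of Lemma~\ref{lem:canonicalkgraph} is unavailable: a heavy weight $a_v$ on a leaf could a priori drag $m$ below $2(n-1)$. The monotonicity reduction to $a_v = 2$ is what neutralises this, after which only the combinatorial fact that these canonical graphs are not trees is needed. For the four graphs with $\delta(G_i) \ge 2$ (which include $G_5 = C_5$) one may alternatively quote Lemma~\ref{lem:canonicalkgraph} directly, but the argument above handles all six cases uniformly.
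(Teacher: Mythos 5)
Your proof is correct, and it shares the paper's backbone --- reduce to the edge bound $m \ge 2(n-1)$ and invoke Lemma~\ref{lem:boundnu} with $\nu=2$ (legitimate by the inertia-preservation lemma for canonical graphs) --- but your method for establishing the edge bound is genuinely different. The paper argues case by case: for $i\in\{5,6,9\}$ it quotes Lemma~\ref{lem:canonicalkgraph} (those canonical graphs have minimum degree at least $2$), and for $i\in\{3,7,8\}$ it writes out $m=\sum_{\{u,v\}\in E(G_i)}a_ua_v$ explicitly and bounds each product as $a_ua_v\ge 2a_u$ or $2a_v$ so that every weight is covered twice, yielding $m\ge 2n$. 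You instead use a coordinate-wise monotonicity argument: incrementing any $a_v$ raises $m$ by $\sum_{w\sim v}a_w\ge 2$ while raising $2(n-1)$ by exactly $2$, so $m-2(n-1)$ is minimized at the corner where every $a_v=2$, where it equals $4m'-4n'+2$; the bound then follows because each $G_i$ with $i\in\{3,5,6,7,8,9\}$ contains a cycle, i.e.\ $m'\ge n'$. This is more uniform (all six cases at once) and isolates exactly what is needed about the canonical graphs, namely that they are not trees; the paper's version is more explicit but needs the edge lists of the figures. Two small remarks. First, your side argument that the only trees in Torgasev's list are $G_2=P_4$ and $G_4=P_5$ tacitly uses the standard fact that for a forest $\nu$ equals the matching number; that is fine, but it is simpler to read off directly from Theorem~\ref{thm:twonegeigs} and Figure~\ref{fig:graphs} that $G_3,G_5,G_6,G_7,G_8,G_9$ each contain a cycle. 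Second, note that your trade-off ($+\ge 2$ against $+2$) is special to $\nu=2$: for $P(t)$ with $t\ge 3$ the same increment argument would not give $m\ge t(n-1)$ without an extra hypothesis, which is exactly the role of $\delta(H)\ge t$ in Lemma~\ref{lem:canonicalkgraph}.
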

\begin{proof}
For $i \in \{5,6,9\}$ the claim follows from Lemma~\ref{lem:canonicalkgraph}. We will consider the remaining 3 cases, and show that in each case we have $m \geq 2n$. Then we can apply Lemma~\ref{lem:boundnu}, and conclude that $s^{-} \geq n-1$.
\begin{enumerate}
  \item $G \in EQ(G_3)$. We have $m=a_1a_2+a_3a_1+a_2a_3+a_3a_4 $, and since $a_i \geq 2 $ for all $i$, we get $m=a_1a_2+a_3a_1+a_2a_3+a_3a_4 \geq 2a_2+2a_1+2a_3+2a_4=2n$.
  \item $G \in EQ(G_7)$. Then
\begin{center}
$m=a_1a_2+a_2a_3+a_3a_1+a_1a_4+a_2a_5 \geq 2a_2+2a_3+2a_1+2a_4+2a_5=2n$.
\end{center}
  \item $G \in EQ(G_8)$. Then
\begin{center}
$m=a_1a_2+a_2a_3+a_3a_4+a_4a_1+a_2a_5+a_3a_5+a_5a_6 \geq 2a_2+2a_3+2a_4+2a_1+2a_5+2a_6=2n$.
\end{center}
\end{enumerate}
Thus we showed that in all the cases $m \geq 2n$, so we are done.
\end{proof}

In order to examine the cases in which $a_i=1$ for some $i$, we will use Lemma~\ref{lem:improvedbound}. In general it seems quite hard to find the exact value of $B_G$, but for our purpose it will be enough to use lower bounds for $B_G$.
\begin{lem}\label{lem:maintwoeigsthird}
Let $G \in EQ(G_i)$ for some $i \in \{5,6,9\}$. Then $s^{-} \geq n-1$.
\end{lem}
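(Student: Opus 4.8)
The plan is to invoke Lemma~\ref{lem:improvedbound} with $\nu = 2$, which reduces the claim to the single inequality $m + B_G \ge 2(n-1)$. Writing $G \in EQ(G_i)$ with block sizes $a_v$ for $v \in V(G_i)$, we have $n = \sum_v a_v$ and $m = \sum_{uv \in E(G_i)} a_u a_v$. Since $(a_u-1)(a_v-1)\ge 0$ gives $a_u a_v \ge a_u + a_v - 1$, and since $\delta(G_i) \ge 2$ for $i \in \{5,6,9\}$,
\[
m \ge \sum_{uv \in E(G_i)}(a_u + a_v - 1) = \sum_{v} \deg_{G_i}(v)\,a_v - |E(G_i)| \ge 2n - |E(G_i)|.
\]
Hence the deficit obeys $2(n-1) - m \le |E(G_i)| - 2$, so it suffices to prove the uniform bound $B_G \ge |E(G_i)| - 2$, free of the particular multiplicities.

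First I would bound $B_G$ below by the corresponding quantity for the canonical graph itself. Selecting one vertex from each block exhibits $G_i$ as an induced subgraph of $G$, and by the cited inertia-preservation result of \cite{TorgaSev852} we have $\pi(G) = \pi(G_i)$ and $\nu(G) = \nu(G_i) = 2$. Cauchy interlacing then yields $\mu_k(G) \ge \mu_k(G_i) > 0$ for $1 \le k \le \pi$, while $\mu_{n-1}(G) \le \mu_{N-1}(G_i) < 0$ and $\mu_n(G) \le \mu_N(G_i) < 0$ for the two negative eigenvalues (here $N = |V(G_i)|$). Thus each product of two positive eigenvalues and the product of the two negative eigenvalues can only increase in passing from $G_i$ to $G$, so by the formula for $B$ in Theorem~\ref{thm:sumofposthree} we obtain $B_G \ge B_{G_i}$.

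Combining the two steps, the lemma follows once the finitely many inequalities $B_{G_i} \ge |E(G_i)| - 2$ are verified for $i \in \{5,6,9\}$. Each value is read off from the spectrum of the small canonical graph via $B_{G_i} = \tfrac14 E(G_i)^2 - |E(G_i)|$; for instance $G_5 = C_5$ gives $B_{C_5} \approx 5.47 \ge 3 = |E(C_5)| - 2$. I expect the main obstacle to be exactly this final verification, since Brualdi's bound only delivers $B_{G_i} \ge 0$, so one genuinely needs the exact energies of $G_6$ and $G_9$ and must confirm the inequality for each. Should $B_{G_i} \ge |E(G_i)| - 2$ fail for some $G_i$, the fallback is to keep the correction term $\sum_{uv}(a_u-1)(a_v-1)$ in the deficit estimate and argue according to the pattern of which blocks satisfy $a_v = 1$, exploiting that both $B_G$ and this term are monotone in the $a_v$ so that only the minimal configurations with all $a_v \in \{1,2\}$ need to be checked.
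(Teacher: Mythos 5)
Your reduction is sound and its skeleton coincides with the paper's: both arguments run through Lemma~\ref{lem:improvedbound} with $\nu=2$, both lower-bound $B_G$ by $B_{G_i}$ via the inertia-preservation lemma plus Cauchy interlacing (your spelled-out justification of $B_G\ge B_{G_i}$ is correct, and is exactly what the paper invokes tersely as ``from the interlacing Theorem''), and both ultimately rest on $a_ua_v\ge a_u+a_v-1$. The genuine difference is the endgame. The paper expands $m$ as the exact sum of edge products separately for each $G_i$ and checks a per-graph polynomial inequality; this retains the slack coming from degree-$3$ vertices and from the cross terms $(a_u-1)(a_v-1)$, so it needs only the weak estimates $B_{G_5}\ge 5$, $B_{G_6}\ge 2$, $B_{G_9}=7$. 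Your uniform bound $m\ge 2n-|E(G_i)|$ discards that slack and concentrates everything into the single numerical inequality $B_{G_i}\ge |E(G_i)|-2$, which you could verify only for $C_5$. It does hold for the other two graphs, but with essentially no margin: $G_6$ is the ``house'' (the $4$-cycle $1234$ plus a vertex $5$ joined to $2$ and $3$, six edges), whose spectrum $\{\approx 2.48,\ \approx 0.69,\ 0,\ \approx -1.17,\ -2\}$ gives $B_{G_6}\approx 4.05$ against the required $4$; and $G_9$ is the triangular prism (nine edges, spectrum $\{3,1,0,0,-2,-2\}$), giving $B_{G_9}=3\cdot 1+(-2)(-2)=7$ against the required $7$ --- exact equality. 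So your plan survives, but only just; one more edge in $G_9$ would have sunk it, and your stated fallback (keep the terms $(a_u-1)(a_v-1)$ and case on which blocks have $a_v=1$) is precisely what the paper does. Note also that for the prism your degree estimate is needlessly lossy: all its degrees equal $3$, so in fact $m\ge 3n-9$ and the deficit $2(n-1)-m\le 7-n\le 1$, which would have restored a comfortable margin.
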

\begin{proof}
First, note that if $a_v \geq 2$ for all $v \in V(G_i)$ then using Lemma~\ref{lem:maintwoeigssecond} we are done. Therefore, we can assume that $a_v=1$ for at least one of the vertices $v \in V(G_i)$. Also, note that for any pair of positive integers $c,d$ we have $cd \geq c+d-1$. We now divide the proof into 3 cases:
\begin{enumerate}
  \item $G \in EQ(G_5)$. Let us number the vertices of $G_5$ by 1,2,3,4,5 in clockwise order, and assume without loss of generality that $a_1=1$. We then have
      \begin{center}
      $m=a_2+a_2a_3+a_3a_4+a_4a_5+a_5$, $n-1=a_2+a_3+a_4+a_5$.
      \end{center}
      Using Lemma~\ref{lem:improvedbound}, it is enough to show that $m \geq 2(n-1)-B_G$.
      The eigenvalues of $G_5$ are $\{2,\frac{-1+\sqrt{5}}{2},\frac{-1+\sqrt{5}}{2},\frac{-1-\sqrt{5}}{2},\frac{-1-\sqrt{5}}{2}\}$, and hence $B_{G_5} \geq 5$. Therefore, from the interlacing Theorem, $B_G \geq 5$, and hence it is enough to show that $m \geq 2(n-1)-5$. Now,
$$
m \geq 2(n-1)-5
$$
if and only if
$$
a_2+a_2a_3+a_3a_4+a_4a_5+a_5 \geq 2a_2+2a_3+2a_4+2a_5 -5
$$
if and only if
$$
a_2a_3+a_3a_4+a_4a_5 \geq a_2+2a_3+2a_4+a_5 -5
$$
if and only if
$$
(a_2a_3-a_2-a_3+1)+(a_3a_4-a_3-a_4+1)+(a_4a_5-a_4-a_5+1) \geq -2
$$
which holds since each summand on the left hand side is at least 0.

  \item $G \in EQ(G_6)$. It is easy to check that $B_{G_6} \geq 2$. Now,
$$
m \geq 2(n-1)-2
$$
if and only if
$$
a_1a_2+a_2a_3+a_3a_4+a_4a_1+a_2a_5+a_3a_5 \geq 2a_1+2a_2+2a_3+2a_4+2a_5 -4
$$
if and only if
$$
(a_1a_2-a_1-a_2+1)+(a_2a_3-a_2-a_3+1)+(a_3a_4-a_3-a_4+1)+
$$
$$
+(a_1a_4-a_1-a_4+1)+a_5(a_2+a_3-2) \geq 0
$$
which holds, so we are done.
\item $G \in EQ(G_9)$. We have $B_{G_9}=7$, and hence in order to prove that $m \geq 2(n-1)-B_G$, it is enough to show that
\begin{center}
$a_1a_2+a_2a_5+a_5a_3+a_3a_4+a_4a_6+a_6a_1 \geq 2a_2+2a_5+2a_3+2a_4+2a_6+2a_1-9$.
\end{center}
This holds since
$$
(a_1a_2-a_1-a_2+1)+(a_2a_5-a_2-a_5+1)+(a_5a_3-a_5-a_3+1)+
$$
$$
+(a_3a_4-a_3-a_4+1)+(a_4a_6-a_4-a_6+1)+(a_6a_1-a_6-a_1+1) \geq 0,
$$
so we are done.
\end{enumerate}
\end{proof}
So far we have shown that for all $i \in \{1,2,4,5,6,9\}$ and $G \in EQ(G_i)$, we have $s^{-} \geq n-1$. In the following lemma we discuss the remaining three cases.
\begin{lem}\label{lem:maintwoeigsfourth}
Let $G \in EQ(G_i)$ for some $i \in \{3,7,8\}$. Assume that for all $v \in V(G_i)$ such that $v$ adjacent to a pendant vertex we have $a_v>1$. Then $s^{-} \geq n-1$.
\end{lem}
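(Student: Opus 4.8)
The plan is to adapt the strategy of the proof of Lemma~\ref{lem:maintwoeigsthird}. Each $G_i$ with $i \in \{3,7,8\}$ has $\nu = 2$, so Lemma~\ref{lem:improvedbound} reduces the goal $s^- \geq n-1$ to the edge-counting inequality $m \geq 2(n-1) - B_G$. As in Lemma~\ref{lem:maintwoeigsthird}, I would first pin down a numerical lower bound for $B_{G_i}$ on the (fixed, small) canonical graph itself, and then lift it to every $G \in EQ(G_i)$ by eigenvalue interlacing: blowing up a vertex into an independent set preserves $\pi$ and $\nu$, while the positive eigenvalues of $G$ dominate those of $G_i$ and its two negative eigenvalues are each at least as negative, so both product sums defining $B$ only grow, giving $B_G \geq B_{G_i}$. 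Computing the spectra gives $B_{G_3} \geq 2$ (the paw), $B_{G_7} \geq 1$ (the bull) and $B_{G_8} \geq 2$, and it then suffices to establish $m \geq 2(n-1) - B_{G_i}$ with these constants.

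For the edge count I would write $m = \sum_{\{u,v\} \in E(G_i)} a_u a_v$ and $n = \sum_v a_v$ and bound each edge from below. The new ingredient relative to Lemma~\ref{lem:maintwoeigsthird} is the treatment of pendant edges. On an ordinary edge I use $a_u a_v \geq a_u + a_v - 1$, whereas on a pendant edge $\{v,p\}$ (with $p$ the pendant) the hypothesis $a_v \geq 2$ yields the stronger bound $a_v a_p \geq 2 a_p$. The point is that this stronger bound already supplies the full $2a_p$ that the target $2n$ demands at the pendant, at the cost of dropping only the single ``$-1$'' the weak bound would have produced; summing over all edges then gives $m \geq 2n - c$ for a small integer $c$. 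For $G_3$ and $G_7$ this yields $m \geq 2n - 3 = 2(n-1) - 1$, which clears $2(n-1) - B_{G_i}$ once $B_{G_3} \geq 2$ respectively $B_{G_7} \geq 1$. For $G_8$ the vertices $2$ and $3$ each meet three edges, so the same accounting leaves $m \geq 2n - 4 + (a_2 - 1) + (a_3 - 1) \geq 2n - 4 = 2(n-1) - 2$, which is exactly absorbed by $B_{G_8} \geq 2$.

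I expect the main obstacle to be verifying the numerical lower bounds on $B_{G_i}$, and in particular $B_{G_8} \geq 2$, since $G_8$ has six vertices and an irrational spectrum whose characteristic polynomial does not factor pleasantly. The cleanest route I would take avoids computing the full spectrum: $G_8$ contains the paw $G_3$ as an induced subgraph (the triangle $\{2,3,5\}$ together with the pendant $6$), and since $\nu(G_8)=2$ forces its two negative eigenvalues to interlace below those of the paw while two of its positive eigenvalues interlace above the paw's positives, one obtains $B_{G_8} \geq B_{G_3} \geq 2$ without solving a degree-six polynomial. A secondary point worth recording is that the pendant-adjacency hypothesis is precisely what the standing assumption of Theorem~\ref{thm:maintwoeigs} guarantees: a copy of any pendant in a blow-up has degree $a_v$, so $\delta(G) \geq 2$ forces $a_v \geq 2$ at every pendant-adjacent vertex, which is why this lemma closes the remaining cases.
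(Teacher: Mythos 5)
Your proposal is correct and follows essentially the same route as the paper's proof: Lemma~\ref{lem:improvedbound} reduces the claim to $m \geq 2(n-1)-B_G$, interlacing transfers a numerical lower bound on $B_{G_i}$ to $B_G$, pendant edges are handled via the hypothesis $a_v \geq 2$ (giving $a_va_p \geq 2a_p$), and the remaining edges via $cd \geq c+d-1$, exactly as the paper does. The only cosmetic difference is your indirect verification of $B_{G_8}\geq 2$ through the induced paw; this is valid, but unnecessary caution, since the characteristic polynomial of $G_8$ factors as $x^2(x-1)(x+2)(x^2-x-4)$, giving $B_{G_8}=\frac{3\sqrt{17}-1}{2}\approx 5.68$ directly.
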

\begin{proof}
We divide the proof into three cases:
\begin{enumerate}
  \item $G \in EQ(G_3)$. In this case, vertex 3 is adjacent to a pendant vertex, so by our assumption $a_3 \geq 2$. Hence, in order to prove that $m \geq 2(n-1)-B_{G}$, it is enough to show $$a_1a_2+a_3a_1+a_2a_3+a_3a_4 \geq 2a_2+2a_1+2a_3+2a_4-4$$
      (keeping in mind that $B_{G_3}>2$). Since $a_3 \geq 2$, then $a_3a_4 \geq 2a_4$, and the inequality $a_1a_2+a_3a_1+a_2a_3 \geq 2a_2+2a_1+2a_3-4$ follows from
\begin{center}
$(a_1a_2-a_1-a_2+1)+(a_3a_1-a_3-a_1+1)+(a_2a_3-a_2-a_3+1) \geq 0$.
\end{center}
  \item $G \in EQ(G_7)$. We have $a_1 \geq 2, a_2 \geq 2$, $B_{G_7}>3$. Therefore, as before, it is enough to show that $a_1a_2+a_2a_3+a_3a_1 \geq 2a_2+2a_3+2a_1 -5$, which holds similarly to the previous case.
  \item $G \in EQ(G_8)$. We have $a_5 \geq 2$, $B_{G_8} \geq 2$. In this case, it is enough to show that
\begin{center}
$a_1a_2+a_2a_3+a_3a_4+a_4a_1+a_2a_5+a_3a_5 \geq 2a_2+2a_3+2a_4+2a_1+2a_5-4$,
\end{center}
 which holds since
\begin{center}
$(a_1a_2-a_1-a_2+1)+(a_2a_3-a_2-a_3+1)+(a_3a_4-a_3-a_4+1)+(a_4a_1-a_1-a_4+1)+(a_2+a_3-2)a_5 \geq 0$.
\end{center}
\end{enumerate}
\end{proof}

Finally, Theorem~\ref{thm:maintwoeigs} follows from Lemmas~\ref{lem:maintwoeigsfirst},~\ref{lem:maintwoeigssecond},~\ref{lem:maintwoeigsthird} and ~\ref{lem:maintwoeigsfourth}.

\section{Conclusions}

It seems unlikely that this conjecture can be proved using bounds on graph energy, because the conjecture is exact for trees but bounds on energy are not exact for trees. The difficulty in proving the conjecture appears to be that graph connectedness needs to be central to a proof. Graph connectedness is equivalent to irreducibility of the adjacency matrix of a graph. Much is known about the largest eigenvalue of an irreducible matrix using Perron-Frobenius theory, but much less is known about all eigenvalues of irreducible matrices. We have attempted a proof using the Lieb-Thirring inequalities \cite{lieb00}, which have applications in quantum mechanics, but without success.


\begin{thebibliography}{10}

\bibitem{ando15}
T. Ando and M. Lin, \emph{Proof of a conjectured lower bound on the chromatic number of a graph}, Linear Algebra and Appl. 485 (2015), 480--484.

\bibitem{brooks41}
R. Brooks, \emph{On colouring the nodes of a network}, Proc. Cambridge Phil. Soc. 37 (1941), 194 - 197.

\bibitem{brualdi06}
R. Brualdi, \emph{Energy of a graph}, in: Notes for AIM Workshop on Spectra of Families of Matrices described by graphs, digraphs and sign patterns, (2006).

\bibitem{brouwer}
A. Brouwer and W. Haemers, Chapter 9 of \emph{Spectra of Graphs}.

\bibitem{constantine85}
G. Constantine, \emph{Lower bounds on the spectra of Symmetric Matrices with Nonnegative Entries}, Linear Algebra and Appl., 65, (1985), 171 - 178.

\bibitem{cvetkovic95}
D. M. Cvetkovic, M. Doob and H. Sachs, \emph{Spectra of graphs}, Johann Ambrosius Barth Verlag, 3rd revised and enlarged edition, 1995.

\bibitem{edwards83}
C. Edwards and C. Elphick, \emph{Lower bounds for the clique and the chromatic number of a graph}, Discrete Appl. Math., 5 (1983), 51 - 64.

\bibitem{godsil}
C. Godsil and G. Royle, Chapter 10 of \emph{Algebraic Graph Theory}.

\bibitem{hong88}
Y. Hong, \emph{A bound on the spectral radius of graphs}, Linear Algebra Appl., 108, (1988), 135 - 140.

\bibitem{hong93}
Y. Hong, \emph{Bounds on eigenvalues of graphs}, Discrete Math., 123, (1993), 65 - 74.

\bibitem{LiWang98}
J. Li and X. Wang, \emph{Lower bound on the sum of positive eigenvalues of a graph}, Acta Appl. Math, 14, (1998), 443--446.

\bibitem{lieb00}
E. H. Lieb, \emph{Lieb-Thirring inequalities}, math arXiv ph/0003039, 2000.

\bibitem{nikiforov02}
V. Nikiforov, \emph{Some inequalities for the largest eigenvalue of a graph}, Combin. Probab. Comp. 11 (2002), 179 - 189.

\bibitem{nikiforov07}
V. Nikiforov, \emph{The energy of graphs and matrices}, J. Math. Anal. Appl. 326, (2007), 1472 -- 1475

\bibitem{Smith}
J. H. Smith, \emph{Some properties of the spectrum of a graph}, in: Combinat. Structures and their appl.
(Gordon and Breach, New York, 1970) 403--406.

\bibitem{TorgaSev85}
A. Torgasev, \emph{Graphs with exactly two negative eigenvalues}, Math. Nachr., 122, (1985), 135--140.

\bibitem{TorgaSev852}
A. Torgasev, \emph{On graphs with a fixed number of negative eigenvalues}, Discrete Math, 57, (1985), 311--317.


\bibitem{wocjan13}
P. Wocjan and C. Elphick, \emph{New spectral bounds on the chromatic number encompassing all eigenvalues of the adjacency matrix}, Elec. J. of Combinatorics, 20, 3, (2013), P39.


\end{thebibliography}
\end{document}